\documentclass[12pt]{extarticle}
\usepackage{amsmath, amsthm, amssymb, hyperref, color}
\usepackage[shortlabels]{enumitem}
\usepackage{graphicx}
\usepackage[all]{xypic}
\usepackage{makecell}


\tolerance 10000
\headheight 0in
\headsep 0in
\evensidemargin 0in
\oddsidemargin \evensidemargin
\textwidth 6.5in
\topmargin .25in
\textheight 8.7in

\newtheorem{theorem}{Theorem}
\newtheorem{proposition}[theorem]{Proposition}

\newtheorem{corollary}[theorem]{Corollary}

\newtheorem{problem}[theorem]{Problem}
\newtheorem{remark}[theorem]{Remark}
\newtheorem{example}[theorem]{Example}
\newtheorem{conjecture}[theorem]{Conjecture}

\newcommand{\RR}{\mathbb{R}}

\newcommand{\PP}{\mathbb{P}}

 \date{}
 
\title{\textbf{Moment Varieties of Gaussian Mixtures}}
\author{
Carlos Am\'endola, Jean-Charles Faug\`{e}re, and Bernd Sturmfels}

\begin{document}

\maketitle

\begin{abstract}
\noindent The points of a moment variety
are the vectors of all moments up to some order
of a family of probability distributions.
We study this variety for  mixtures of Gaussians.
Following up on Pearson's classical work from 1894,
we apply current tools from computational algebra
to recover the parameters from the moments.
Our moment varieties extend
objects familiar to algebraic geometers. For
instance, the secant varieties of Veronese varieties
are the loci obtained by setting all covariance matrices to zero.
We compute the ideals of the
5-dimensional moment varieties  representing
mixtures of two univariate Gaussians, and we
offer a comparison to the  maximum likelihood approach.
\end{abstract}

\section{Introduction}

The $n$-dimensional Gaussian distribution is defined by the moment generating function
\begin{equation}
\label{eq:gaussian}
\sum_{i_1,i_2,\ldots,i_n \geq 0}
\frac{m_{i_1 i_2 \cdots i_n}}{i_1 ! i_2 ! \cdots i_n !} 
t_1^{i_1} t_2^{i_2} \cdots t_n^{i_n} \,\,\,=\,\,\,
{\rm exp}(t_1 \mu_1 + \cdots + t_n \mu_n) \cdot
{\rm exp} \biggl( \frac{1}{2} \sum_{i,j=1}^n \sigma_{ij} t_i t_j \biggr).
\end{equation}
The model parameters are the entries of the mean
$\mu = (\mu_1,\ldots,\mu_n)$
and  of the covariance matrix
$\,\Sigma = (\sigma_{ij})$.
The unknowns $\mu_i$ have degree $1$,
and the unknowns $\sigma_{ij}$ have degree~$2$.
Then $m_{i_1 i_2 \cdots i_n}$ is a homogeneous polynomial 
of degree $i_1{+}i_2{+}\cdots{+}i_n$ in the $n+\binom{n+1}{2}$  unknowns.

Let $\mathbb{P}^N$ be the projective space of dimension 
$N = \binom{n+d}{d}-1$ whose coordinates
are all $N+1$ moments $m_{m_{i_1 i_2 \cdots i_n}}$
with $i_1 + i_2 + \cdots + i_n \leq d$. The
closure of the image of parametrization (\ref{eq:gaussian})
is a subvariety $\mathcal{G}_{n,d} $ of $\PP^N$.
We called this the
{\em Gaussian moment variety of order $d$}.
Its dimension equals $n + \binom{n+1}{2}$.
 In Section~\ref{sec:two} we discuss this variety and its
 defining polynomials.
 
The main object of study in this paper is the
secant variety $\sigma_k (\mathcal{G}_{n,d})$ 
of the Gaussian moment variety.
That variety is the Zariski closure of the set of vectors
of moments of order $\leq d$ of any distribution on $\RR^n$
that is the mixture of $k$ Gaussians, for $k=2,3,\ldots$.
In short,  $\sigma_k (\mathcal{G}_{n,d})$ is the projective variety
that represents mixtures of $k$ Gaussians.
 Since such mixtures are identifiable \cite{Yak}, this secant variety eventually has
the expected dimension:
\begin{equation}
\label{eq:mixturedim}
 {\rm dim} (\sigma_k (\mathcal{G}_{n,d}))
\,= \,  k \cdot \biggl[n + \binom{n+1}{2} \biggr]  + k-1 \qquad \hbox{for} \,\, d \gg 0 . 
\end{equation}
The parametrization of $\sigma_k(\mathcal{G}_{n,d})$ is given
by replacing the right hand side of (\ref{eq:gaussian}) with
a convex combination of $k$ such expressions.
The number of model parameters is the right hand side of (\ref{eq:mixturedim}).
If the moments $m_{i_1 i_2 \cdots i_n}$ are derived numerically from data,
then one obtains a system of polynomial equations whose
unknowns are the  model parameters.  The process
of solving these polynomial equations is known as the {\em method of moments}
for Gaussian mixtures.

For a concrete example,  consider the case $n=1$ and $d=6$.
The Gaussian moment variety $\mathcal{G}_{1,6}$ is a surface 
 of degree $15$ in $\mathbb{P}^6$  that is cut out by $20$ cubics.
 These cubics will be explained in Section~\ref{sec:two}.
 For $k=2$ we obtain the variety of secant lines, here denoted
 $\sigma_2(\mathcal{G}_{1,6})$. This represents mixtures of
 two univariate Gaussians. It has the parametric representation
\begin{equation}
\label{eq:karlpara}
 \begin{matrix}
m_0 & = & \!\!\!\!\!\!\!\!\! \!\!\!\! 1 \\
m_1 & = & \lambda \mu + (1-\lambda) \nu \\
m_2 & = & \lambda (\mu^2 + \sigma^2) + (1-\lambda) (\nu^2 + \tau^2) \\
m_3 & = & \lambda (\mu^3 + 3 \mu \sigma^2) + (1-\lambda) (\nu^3 + 3 \nu \tau^2) \\
m_4 & = & \lambda (\mu^4 + 6 \mu^2 \sigma^2 + 3 \sigma^4) 
          + (1-\lambda) (\nu^4 + 6 \nu^2 \tau^2  + 3 \tau^4) \\
m_5 & = & \lambda (\mu^5 + 10 \mu^3 \sigma^2 + 15 \mu \sigma^4) 
          + (1-\lambda) (\nu^5 + 10 \nu^3 \tau^2  + 15 \nu \tau^4) \\
m_6 & = & \lambda (\mu^6 + 15 \mu^4 \sigma^2 + 45 \mu^2 \sigma^4 + 15 \sigma^6) 
          + (1-\lambda) (\nu^6 + 15 \nu^4 \tau^2  + 45 \nu^2 \tau^4 + 15 \tau^6) 
\end{matrix}
\end{equation}
These are obtained from the first seven coefficients in the moment generating function
$$
\sum_{i=0}^\infty  \frac{m_i }{i!} t^i \,\, = \,\, \lambda 
\cdot {\rm exp}(\mu t + \frac{1}{2} \sigma^2 t^2) 
\,+\, (1-\lambda)  \cdot {\rm exp}( \nu t + \frac{1}{2} \tau^2 t^2).
$$
Here and throughout we
use the standard notation $\sigma^2$ for the variance $\sigma_{11}$
when $n=1$.
The variety $\sigma_2(\mathcal{G}_{1,6})$ is five-dimensional, so it is a
 hypersurface in $\mathbb{P}^6$. In Section~\ref{sec:three} we derive:

\begin{theorem}
\label{thm:thirtynine}
The defining polynomial of $\sigma_2(\mathcal{G}_{1,6})$
is a sum of $\,31154$ monomials of degree $39$.
This polynomial has degrees
$25, 33, 32, 23, 17, 12, 9$  in
$m_0,m_1,m_2,m_3,m_4,m_5,m_6$
respectively.
\end{theorem}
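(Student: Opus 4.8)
The plan is to compute the unique (up to scalar) defining polynomial of the hypersurface $\sigma_2(\mathcal{G}_{1,6}) \subset \mathbb{P}^6$ by eliminating the five parameters $\lambda, \mu, \nu, \sigma, \tau$ from the parametrization \eqref{eq:karlpara}. Concretely, I would work in the polynomial ring $\QQ[\lambda,\mu,\nu,\sigma^2,\tau^2, m_0,\ldots,m_6]$, form the ideal generated by the seven polynomials $m_i - f_i(\lambda,\mu,\nu,\sigma^2,\tau^2)$ coming from \eqref{eq:karlpara} (treating $\sigma^2,\tau^2$ as single variables of weight $2$), and compute the elimination ideal with respect to the parameters. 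Because the variety is a hypersurface, this elimination ideal is principal, and its generator is the desired polynomial $F$. Naive Gröbner-basis elimination in an elimination order is likely to be infeasible at this size, so the main computational idea is to exploit the structure: either compute a Gröbner basis with a fast implementation such as \texttt{FGb}, or, more robustly, reconstruct $F$ by interpolation — evaluate the parametrization at many random rational (or finite-field) points, obtain the corresponding moment vectors, and solve a linear system for the coefficients of a generic degree-$39$ form in $m_0,\ldots,m_6$. To make interpolation feasible one first needs a bound on the total degree and on the partial degrees; these can be obtained heuristically by running the computation modulo several primes and over a lower-precision ansatz, then confirmed.

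The key steps, in order: (1) Homogenize and set up the ideal $I = \langle m_i - f_i \rangle$ in the weighted ring, recording that $\deg m_i = i$; (2) determine the multidegree of $F$ — here I expect to first guess the partial degrees $(25,33,32,23,17,12,9)$ in $(m_0,\ldots,m_6)$ and the total degree $39$ by symmetry considerations and by the weighted-homogeneity of the parametrization (the map is homogeneous of weighted degree $i$ in coordinate $i$, which forces a grading constraint on $F$ and lets one predict its weighted degree), then verify the guess; (3) carry out the elimination, either by a direct Gröbner basis computation over $\QQ$ or a large prime, or by dense multivariate interpolation against the predicted monomial support (the $31154$ monomials); (4) certify the output by checking that $F$ vanishes identically on \eqref{eq:karlpara} (substitute the symbolic parametrization and confirm the result is zero) and that $F$ is irreducible, so that it indeed generates the defining ideal of the hypersurface rather than a proper multiple of it; (5) read off the monomial count and the partial degrees from the computed $F$.

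The main obstacle will be step (3): the sheer size of the computation. A polynomial of degree $39$ in seven variables with over thirty thousand terms pushes ordinary Gröbner-basis elimination to its limits, both in time and memory, especially over $\QQ$ where coefficient swell is severe. The realistic route is modular: perform the elimination over several finite fields $\mathbb{F}_p$, use the predicted monomial support to cut the interpolation problem down to a linear-algebra problem of manageable size, and reconstruct the rational (here, integer) coefficients by the Chinese Remainder Theorem together with rational reconstruction. A secondary subtlety is making sure the elimination ideal is exactly principal and that no extraneous component (e.g. coming from the locus $\lambda \in \{0,1\}$ or from degenerate coincidences $\mu=\nu$, $\sigma=\tau$) contaminates the result; this is handled by saturating $I$ with respect to $\lambda(1-\lambda)$ and $(\mu-\nu)$ before eliminating, or equivalently by checking irreducibility of the final $F$ as in step (4). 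Once $F$ is in hand, the stated numerical facts — $31154$ monomials, total degree $39$, and the partial degree vector — are immediate from inspection.
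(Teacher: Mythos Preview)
Your proposal is methodologically sound---brute-force elimination of the five parameters from~\eqref{eq:karlpara}, aided by modular interpolation and a final vanishing/irreducibility check, would in principle produce the same polynomial---but the paper's argument is structurally quite different and far more economical. Rather than attacking the elimination in moment coordinates, the paper first passes to cumulant coordinates via~\eqref{eq:karlpara2}, which linearizes the single-Gaussian locus; it then normalizes to $m_1=0$, replaces the two means by their elementary symmetric functions $s=\mu+\nu$ and $p=\mu\nu$, and introduces the auxiliary rational invariants $R_1,\dots,R_5$. After these reductions one is left with just three polynomial relations~\eqref{twoeqns} and~\eqref{thirdeqn} in the variables $s,p,k_2,\dots,k_6$, and the entire elimination collapses to removing $s$ and $p$ from three equations. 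The output is a polynomial of degree~$23$ with only $195$ terms in $k_2,k_3,k_4,k_5,k_6$ (after clearing an extraneous factor $k_3^3$); the $31154$-term polynomial in $m_0,\dots,m_6$ is then obtained by the purely mechanical substitution of~\eqref{eq:karlpara2} followed by homogenization. What the paper's route buys is that the hard step becomes a trivial Gr\"obner computation rather than a heroic one, and the intermediate $195$-term cumulant relation is itself of independent interest (it is the relation Lazard studies). What your route buys is generality: it requires no insight into the Pearson--Lazard reduction and would apply verbatim to other moment hypersurfaces, at the cost of the computational burden you correctly anticipate in step~(3).
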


We see in particular that $m_6$ can be recovered from
$m_1,m_2,m_3,m_4$ and $m_5$ by solving a
univariate equation of degree $9$. This number is of
special historic interest.
The 1894 paper \cite{Pea} introduced
the method of moments and, in our view, it is
the first paper in
algebraic statistics.
 Pearson analyzed phenotypic data from
two crab populations, and he showed how to
find the five parameters in (\ref{eq:karlpara}) 
by solving an equation of degree $9$ if the 
first five moments are given. The two occurrences
of the number $9$ are equivalent, in light of Lazard's result
\cite{Laz} that the parameters
$\lambda, \mu,\nu, \sigma,\tau$ are rational functions
 in the first six moments
$m_1,\ldots,m_6$.

The hypersurface in $\PP^6$ described in Theorem \ref{thm:thirtynine}
contains a familiar threefold, namely the
determinantal variety $\sigma_2(\nu_6(\PP^1))$
 defined by the $3 \times 3$-minors of   the  $4 \times 4$-Hankel matrix
\begin{equation}
\label{eq:hankel}
 \begin{pmatrix}
m_0 & m_1 & m_2 & m_3 \\
 m_1 & m_2 & m_3 & m_4 \\
  m_2 & m_3 & m_4 & m_5 \\
   m_3 & m_4 & m_5 & m_6
   \end{pmatrix}.
\end{equation}
This can be seen by setting $\sigma = \nu = 0$ in the
parametrization (\ref{eq:karlpara}). Indeed, if the variances
tend to zero then the Gaussian mixture converges to 
a mixture of the point distributions, supported
at the means $\mu$ and $\nu$. The first $d+1$ moments of
point distributions form the rational normal curve in $\PP^d$,
consisting of  Hankel matrices  of rank $1$.
Their $k$th mixtures specify a secant variety of the rational normal curve,
consisting of Hankel matrices of rank~$k$.

The last four sections of this paper are organized as follows.
In Sections 3 and 4 we focus on
 mixtures of univariate Gaussians. We derive Pearson's hypersurface
    $\sigma_2(\mathcal{G}_{1,6})$ in detail, and we examine the 
    varieties $\sigma_2(\mathcal{G}_{1,d})$ for $d>6$ and 
$\sigma_k(\mathcal{G}_{1,3k})$ for $k=3,4$. 
In Section 5 we apply the
method of moments to the data discussed in \cite[\S 3]{CB},
thus offering a comparison to maximum likelihood estimation.
In Section 6 we explore some cases of the moment varieties for
Gaussian mixtures with $n=2$, and we discuss 
directions for future research.

\section{Gaussian Moment Varieties}
\label{sec:two}

In this section we examine the Gaussian moment varieties
$\mathcal{G}_{n,d}$, starting with the case $n=1$.
The moment variety $\mathcal{G}_{1,d}$ is a surface in $\PP^d$.
Its defining polynomial equations are as follows:

\begin{proposition}
\label{prop:surface}
Let $d \geq 3$.
The homogeneous prime ideal of the Gaussian moment surface $\mathcal{G}_{1,d}$ 
is minimally generated by $\binom{d}{3}$ cubics.
These are the $3 \times 3$-minors of the $3 \times d$-matrix
$$ 
H_d \,\,= \, \left(\begin{array}{ccccccc} 
    0&m_0&2m_1&3m_2 & 4m_3 & \cdots & (d-1) m_{d-2}\\ 
    m_0& m_1& m_2 & m_3 & m_4 & \cdots & m_{d-1}\\
    m_1& m_2& m_3 & m_4 & m_5 &\cdots & m_d\\
    \end{array}\right).
$$  
\end{proposition}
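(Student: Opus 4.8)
The plan rests on a three-term recursion for the moments of one Gaussian. With $n=1$, the generating function $M(t)=\exp(\mu t+\tfrac12\sigma^2 t^2)$ in \eqref{eq:gaussian} satisfies $M'(t)=(\mu+\sigma^2 t)M(t)$, and matching coefficients of $t^i/i!$ yields
$$m_{i+1}\;=\;\mu\,m_i+i\,\sigma^2 m_{i-1}\qquad(i\ge 0,\ m_{-1}:=0).$$
Column by column this says that on $\mathcal{G}_{1,d}$ the third row of $H_d$ equals $\mu$ times the second row plus $\sigma^2$ times the first; hence $H_d$ has rank $\le 2$ along $\mathcal{G}_{1,d}$, so each of its $\binom{d}{3}$ maximal minors---a cubic in $m_0,\dots,m_d$---vanishes on $\mathcal{G}_{1,d}$ and, being homogeneous, on its Zariski closure. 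Writing $I_3(H_d)$ for the ideal generated by these minors, this gives $I_3(H_d)\subseteq I(\mathcal{G}_{1,d})$.

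For the reverse inclusion I would first establish $V(I_3(H_d))=\mathcal{G}_{1,d}$ set-theoretically. In the chart $m_0\neq 0$, normalize $m_0=1$: the columns $(0,1,m_1)$ and $(1,m_1,m_2)$ are independent, so rank $\le 2$ forces every later column to be their unique linear combination, and solving the resulting $2\times 2$ systems recovers the recursion above with $\mu:=m_1$ and $\sigma^2:=m_2-m_1^2$; thus such a point lies on the parametrized surface. On the hyperplane $\{m_0=0\}$, a short induction (if $m_0=\cdots=m_k=0$ then a single $3\times3$ minor forces rank $3$ as soon as $m_{k+1}\neq 0$) confines $V(I_3(H_d))\cap\{m_0=0\}$ to the line $\{m_0=\cdots=m_{d-2}=0\}$, of dimension $1$. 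Hence $V(I_3(H_d))=\mathcal{G}_{1,d}$, an irreducible surface, so $I_3(H_d)$ has codimension $d-2$ in $\mathbb{P}^d$.

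To upgrade to the ideal statement, observe that $d-2$ is exactly the generic codimension of the ideal of maximal minors of a $3\times d$ matrix. Therefore the Eagon--Northcott complex resolves $R/I_3(H_d)$, where $R=\mathbb{C}[m_0,\dots,m_d]$; in particular this ring is Cohen--Macaulay, hence unmixed with no embedded primes. A Jacobian check at a general point of $\mathcal{G}_{1,d}$---where $H_d$ has rank exactly $2$ and the ambient determinantal variety $\{\operatorname{rank}\le 2\}$ is smooth---shows $R/I_3(H_d)$ is generically reduced; Cohen--Macaulay together with generically reduced gives reduced, and with irreducibility this makes it a domain, so $I_3(H_d)=I(\mathcal{G}_{1,d})$ is prime. (It would be cleaner to quote Eisenbud's theorem on $1$-generic matrices, but the $0$ in the upper-left corner prevents $H_d$ from being literally $1$-generic, so one must either work around that entry or run the Cohen--Macaulay argument.)

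Finally, all entries of $H_d$ are linear forms, so the Eagon--Northcott complex is the minimal graded free resolution of $R/I_3(H_d)$; its first free module maps onto $I_3(H_d)$ and has rank $\binom{d}{3}$, all in degree $3$, so $I_3(H_d)$ is minimally generated by exactly $\binom{d}{3}$ cubics---necessarily the maximal minors of $H_d$. The step I expect to be the true obstacle is promoting ``$V(I_3(H_d))=\mathcal{G}_{1,d}$ as sets'' to ``$I_3(H_d)$ is radical'': the non-generic pattern of $H_d$---moments repeated along anti-diagonals, scalar multiples in the top row, and the corner zero---has to be controlled carefully enough to rule out a non-reduced structure.
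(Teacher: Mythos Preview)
Your argument is correct and follows the same overall strategy as the paper: the recurrence $m_{i+1}=\mu m_i+i\sigma^2 m_{i-1}$ gives the containment $I_3(H_d)\subseteq I(\mathcal{G}_{1,d})$; a direct analysis on $\{m_0\neq 0\}$ and on $\{m_0=0\}$ pins down the dimension; and the fact that the codimension equals the generic determinantal codimension $d-2$ yields Cohen--Macaulayness, hence no embedded primes.

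Two points where your write-up differs from, and in fact improves on, the paper's proof are worth flagging. First, the paper bounds $\dim V(J_d)$ via an initial-ideal argument (the antidiagonal monomials generate $\langle m_1,\dots,m_{d-2}\rangle^3$), whereas you do it geometrically by the induction on the vanishing of $m_0,\dots,m_{d-2}$ at infinity; both work, and yours avoids introducing a term order. Second, the paper passes from ``Cohen--Macaulay with no embedded primes and $V(J_d)=V(I_d)$'' directly to $J_d=I_d$, which strictly speaking only gives that $J_d$ is $I_d$-primary; your Jacobian check supplying generic reducedness (hence $R_0$, which together with the CM property gives reducedness) is exactly the missing step. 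Likewise, the paper asserts minimality of the $\binom{d}{3}$ cubic generators without proof, and your observation that the Eagon--Northcott complex is the \emph{minimal} free resolution (all matrix entries being linear or zero) cleanly supplies it. The only cosmetic point to tidy is that your sentence ``Hence $V(I_3(H_d))=\mathcal{G}_{1,d}$'' comes slightly before you have equidimensionality from CM; once CM is in hand, the $1$-dimensional locus at infinity cannot be a component, and the equality follows.
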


\begin{proof}
Let $I_d = \mathcal{I}(\mathcal{G}_{1,d})$ be the vanishing ideal of the moment surface,
 and let $J_d$ be the ideal generated by the  $3 \times 3$-minors of $H_d$. 
A key observation,
easily checked in (\ref{eq:karlpara}), is that the moments
of the univariate Gaussian distribution
satisfy the  recurrence relation 
\begin{equation}
\label{eq:recurrence}
 m_i \,\,= \,\, \mu ~ m_{i-1} + (i-1) \sigma^2 m_{i-2}
\,\,\, \, \hbox{for} \,\,\, i \geq 1 . 
\end{equation}
Hence the row vector $(\sigma^2, \mu, -1)$ is in the left kernel of $H_d$. 
Thus ${\rm rank}(H_d)=2$, and this means that all 
$3 \times 3$-minors of $H_d$ indeed vanish on the surface
$\mathcal{G}_{1,d}$. This proves $J_d \subseteq I_d$.

From the previous inclusion we have ${\rm dim}(V(J_d)) \geq 2$.
Fix a monomial order such that the antidiagonal product is the leading
term in each of the $3 \times 3$-minors of $H_d$. These leading terms
are the distinct cubic monomials in $m_1,m_2,\ldots,m_{d-2}$. Hence
the initial ideal satisfies
\begin{equation}
\label{eq:initialideal}
\langle m_1,m_2, \ldots,m_{d-2} \rangle^3 \,\, \subseteq \,\, {\rm in}(J_d) .
\end{equation}
This shows that ${\rm dim}(V(J_d)) = {\rm dim}(V({\rm in}(J_d))) \leq 2$,
and hence $V(J_d)$ has dimension 2 in $\PP^d$.

We next argue that $V(J_d)$ is an irreducible surface.
On the affine space $\mathbb{A}^d = \{m_0 = 1\}$, this
clearly holds, even ideal-theoretically,
 because the minor indexed by $1$, $2$ and $i$ 
expresses $m_i$ as a polynomial in $m_1$ and $m_2$.
Consider the intersection of  $V(J_d)$ with $\mathbb{P}^{d-1} = \{m_0 = 0\}$.
The matrix $H_d$ shows that $m_1 = m_2 = \cdots = m_{d-2} = 0$ holds 
on that hyperplane at infinity, so $V(J_d) \cap \{m_0 = 0\}$ is a curve.
Every point on that curve is the limit of points in $V(J_d) \cap \{m_0 = 1\}
= V(I_d) \cap \{m_0 = 1\}$, obtained by making
$(\mu,\sigma)$ larger in an appropriate direction.
This shows that $V(J_d)$ is irreducible, and we conclude
that $V(J_d) = V(I_d)$.

At this point we only need to exclude the possibility that
$J_d$ has lower-dimensional embedded components.
However, there are no such components
  because the
ideal of maximal minors of a $3 \times d$-matrix of unknowns
is Cohen-Macaulay, and $V(J_d)$ has the expected dimension
for an intersection with $\PP^d$. This shows that $J_d$
is a Cohen-Macaulay ideal. Hence $J_d$ has no embedded
associated primes, and we conclude that $J_d  = I_d$ as desired.
\end{proof}

\begin{corollary}
The $3 \times 3$-minors of the matrix $H_d$ form a Gr\"obner basis
for the prime ideal of  the Gaussian moment surface
$\mathcal{G}_{1,d} \subset \PP^d$ with respect to the 
reverse lexicographic term order.
\end{corollary}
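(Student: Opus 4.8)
The plan is to upgrade the equality $J_d = I_d$ from Proposition~\ref{prop:surface} to the stronger assertion that the $\binom{d}{3}$ minors form a Gr\"obner basis, via a Hilbert function comparison. Keep the notation of that proof: $S = \QQ[m_0,\dots,m_d]$, $J_d = I_d = \mathcal{I}(\mathcal{G}_{1,d})$, and set $M = \langle m_1,\dots,m_{d-2}\rangle^{3}$. First I would record, for the reverse lexicographic order with $m_0 > m_1 > \cdots > m_d$, that the leading term of the $3\times 3$-minor of $H_d$ on columns $c_1 < c_2 < c_3$ is a nonzero scalar times its antidiagonal product $m_{c_1}\,m_{c_2-1}\,m_{c_3-2}$, and that as $(c_1,c_2,c_3)$ ranges over all index triples these leading terms run exactly through the $\binom{d}{3}$ distinct cubic monomials in $m_1,\dots,m_{d-2}$. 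The identification of those monomials is already in the proof of Proposition~\ref{prop:surface}; what must be added here is that ``antidiagonal product'' is the reverse lexicographic leading term, which one checks from the Hankel-type pattern $(H_d)_{ij} = (\text{scalar})\cdot m_{i+j-3}$ (the antidiagonal choice minimizes the largest index that occurs). Consequently $M$ is generated by the leading terms of the minors and $M \subseteq \mathrm{in}(J_d)$; by the standard criterion the minors form a Gr\"obner basis if and only if $M = \mathrm{in}(J_d)$, and since $\mathrm{Hilb}(S/\mathrm{in}(J_d)) = \mathrm{Hilb}(S/J_d)$ this reduces to proving $\mathrm{Hilb}(S/M) = \mathrm{Hilb}(S/J_d)$.

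The left-hand side is immediate: $S = \QQ[m_0,m_{d-1},m_d]\otimes_{\QQ}\QQ[m_1,\dots,m_{d-2}]$ and $M$ is the extension of $\langle m_1,\dots,m_{d-2}\rangle^{3}$ from the second factor, so
\[
  \mathrm{Hilb}(S/M,\,t)\;=\;\frac{\,1+(d-2)\,t+\binom{d-1}{2}\,t^{2}\,}{(1-t)^{3}} .
\]
For the right-hand side I would use three facts supplied by Proposition~\ref{prop:surface}: $J_d$ is Cohen--Macaulay, it has codimension $d-2$ (so $S/J_d$ has Krull dimension $3$), and it is minimally generated by the $\binom{d}{3}$ minors, which are therefore linearly independent and span $(J_d)_3$, while $J_d$ has no form of degree $\leq 2$. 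Cohen--Macaulayness makes $\mathrm{Hilb}(S/J_d,t)\,(1-t)^{3}$ a polynomial $h(t)=\sum_i h_i t^i$ (the $h$-polynomial), namely the Hilbert series of a standard graded Artinian algebra. From the dimensions $\dim_{\QQ}(S/J_d)_i = 1,\ d+1,\ \binom{d+2}{2},\ \binom{d+3}{3}-\binom{d}{3}$ for $i=0,1,2,3$ one reads off $h_0 = 1$, $h_1 = d-2$, $h_2 = \binom{d-1}{2}$, and
\[
  h_3 \;=\; \binom{d+3}{3}-\binom{d}{3}-3\binom{d+2}{2}+3(d+1)-1 \;=\; 0 .
\]
Since $h(t)$ is the Hilbert series of a standard graded algebra, $h_3 = 0$ forces $h_i = 0$ for all $i \geq 3$, so $\mathrm{Hilb}(S/J_d,t)$ equals the displayed expression for $\mathrm{Hilb}(S/M,t)$. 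Hence $M = \mathrm{in}(J_d)$, and the minors form the claimed Gr\"obner basis.

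The only parts that require genuine care are the degree-$3$ arithmetic above and the verification that the antidiagonal products are the reverse lexicographic leading terms; both are routine but load-bearing. One could instead appeal to a known Gr\"obner basis theorem for maximal minors of Hankel-type matrices of linear forms, but since $H_d$ is not a matrix of distinct variables the self-contained Cohen--Macaulay argument above seems cleanest.
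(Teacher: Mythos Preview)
Your argument is correct. Both you and the paper reduce to showing
$\mathrm{Hilb}(S/M)=\mathrm{Hilb}(S/J_d)$, but the justifications diverge. The paper
invokes an external fact: the Hilbert series numerator of the ideal of maximal
minors of a \emph{generic} $3\times d$ matrix equals that of
$\langle m_1,\dots,m_{d-2}\rangle^{3}$, and this numerator is preserved when
specializing to $H_d$ because the specialization is a regular linear section of
a Cohen--Macaulay ideal. Your route avoids the generic determinantal input: you
pass to an Artinian reduction $A$ of $S/J_d$ by a linear regular sequence (using
Cohen--Macaulayness), compute $h_0,h_1,h_2,h_3$ from the known graded pieces
$(J_d)_{\le 3}$, and then use the purely combinatorial fact that a standard
graded Artinian algebra with $A_3=0$ satisfies $A_{\ge 3}=0$. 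This is more
self-contained and makes explicit exactly which low-degree information is doing
the work (the linear independence of the minors and the absence of quadrics in
$J_d$); the paper's argument is shorter if one is willing to quote the
determinantal Hilbert series. Your identification of the antidiagonal as the
reverse lexicographic leading term is the same step the paper uses in the proof
of Proposition~\ref{prop:surface}, and your verification via the Hankel pattern
$(H_d)_{ij}\propto m_{i+j-3}$ is the right way to make it precise.
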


\begin{proof}
The ideal $J_d$ of $\mathcal{G}_{1,d}$ is
generated by the  $3 \times 3$-minors of $H_d$.
Our claim states that equality holds in 
(\ref{eq:initialideal}). This can be seen by 
examining the Hilbert series of both ideals.
It is well known that the ideal of $r \times r$-minors
of a generic $r \times d$-matrix has the same
numerator of the Hilbert series as the $r$-th power of
the maximal ideal $\langle m_1,m_2,\ldots,m_{d-r+1} \rangle$.
Since that ideal is Cohen-Macaulay,
this Hilbert series numerator remains unchanged under
transverse linear sections. Hence our ideal $J_d$
has the same Hilbert series numerator as 
$\langle m_1,m_2,\ldots,m_{d-2} \rangle^3$.
This implies that the two ideals in
(\ref{eq:initialideal}) have the same Hilbert series, so they are equal.
\end{proof}

The argument above tells us that our surface has the same degree as
$\langle m_1,m_2,\ldots,m_{d-2} \rangle^3$:

\begin{corollary}
The Gaussian moment surface $\mathcal{G}_{1,d}$ has degree $\binom{d}{2}$ in $\PP^d$.
\end{corollary}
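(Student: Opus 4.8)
The plan is to cash in the identification of Hilbert series from the preceding corollary. Recall that the degree of a projective scheme is read off from the leading coefficient of its Hilbert polynomial (multiplied by the factorial of the dimension), and that we have just shown that the prime ideal $I_d$ of $\mathcal{G}_{1,d}$ has the same Hilbert series as the monomial ideal $\langle m_1,m_2,\ldots,m_{d-2}\rangle^3$ inside $R=\CC[m_0,m_1,\ldots,m_d]$. Both ideals cut out two-dimensional schemes, so there is no dimension mismatch, and it suffices to compute the degree of $\langle m_1,\ldots,m_{d-2}\rangle^3$.

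To do that, I would first observe that this ideal uses only $d-2$ of the $d+1$ variables, so that $R/\langle m_1,\ldots,m_{d-2}\rangle^3$ is the tensor product of the polynomial ring $\CC[m_0,m_{d-1},m_d]$ with the Artinian ring $A=\CC[m_1,\ldots,m_{d-2}]/\langle m_1,\ldots,m_{d-2}\rangle^3$. The $\CC$-vector space dimension of $A$ equals the number of monomials of degree at most $2$ in $d-2$ variables, namely $1+(d-2)+\binom{d-1}{2}=\binom{d}{2}$. Hence the Hilbert series of $R/\langle m_1,\ldots,m_{d-2}\rangle^3$ equals $\bigl(1+(d-2)\,t+\binom{d-1}{2}\,t^2\bigr)/(1-t)^3$, which re-confirms that the scheme is a surface and shows its degree is the value $\binom{d}{2}$ of the numerator at $t=1$. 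Equivalently and more concretely, the standard monomials of $R/\langle m_1,\ldots,m_{d-2}\rangle^3$ in degree $e$ are the $m_0^{a_0}\cdots m_d^{a_d}$ with $a_1+\cdots+a_{d-2}\le 2$; splitting by whether this partial degree is $0$, $1$, or $2$ gives $H(e)=\binom{e+2}{2}+(d-2)\binom{e+1}{2}+\binom{d-1}{2}\binom{e}{2}$, a degree-$2$ polynomial in $e$ with leading coefficient $\tfrac12\bigl(1+(d-2)+\binom{d-1}{2}\bigr)=\tfrac12\binom{d}{2}$, so that $\deg\mathcal{G}_{1,d}=2!\cdot\tfrac12\binom{d}{2}=\binom{d}{2}$.

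There is essentially no obstacle here beyond bookkeeping: the one step to invoke carefully is the transfer of the Hilbert series from $I_d$ to the monomial ideal, which is precisely the content of the previous corollary, together with the elementary fact that the degree is a Hilbert-series invariant among schemes of a fixed dimension. The rest is the routine multiplicity computation for a power of a coordinate linear ideal recorded above.
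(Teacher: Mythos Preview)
Your proof is correct and follows essentially the same route as the paper: both reduce to the equality of Hilbert series between $I_d$ and $\langle m_1,\ldots,m_{d-2}\rangle^3$ established in the preceding corollary, and then read off the degree from the latter monomial ideal. You simply make explicit the multiplicity computation for the cube of a coordinate ideal, which the paper leaves implicit.
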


It is natural to ask whether the nice determinantal representation
extends to the varieties $\mathcal{G}_{n,d}$ when $n \geq 2$.
The answer is no, even in the first nontrivial case, when $n=2$ and $d=3$:

\begin{proposition} \label{prop:G23}
The $5$-dimensional variety $\mathcal{G}_{2,3}$ has degree $16$ in $\PP^9$.
Its homogeneous prime ideal is minimally generated by $14$ cubics and $4$ quartics,
and the Hilbert series  equals
$$ \dfrac{1+4t+10t^2+6t^3-4t^4-t^5}{(1-t)^{6}}. $$
Starting from four of the cubics, the ideal can be computed by a 
saturation as follows:
\begin{equation}
\label{eq:saturation}
 \begin{matrix}
\!\!\!\! \langle
\,2m_{10}^3-3 m_{00} m_{10} m_{20} +m_{00}^2 m_{30}\,,\,\, 
2 m_{01} m_{10}^2  - 2 m_{00} m_{10} m_{11}-m_{00} m_{01} m_{20}+m_{00}^2 m_{21}, 
\qquad \qquad \\
\! 2 m_{01}^2 m_{10} {-}m_{00} m_{02} m_{10}{-}2 m_{00} m_{01} m_{11}{+}m_{00}^2 m_{12}, 
2 m_{01}^3 {-} 3 m_{00} m_{01} m_{02}{+}m_{00}^2 m_{03}\,
\rangle  : \langle m_{00} \rangle^\infty .
\end{matrix}
\end{equation}
\end{proposition}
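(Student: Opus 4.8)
The plan is to dehomogenize on the chart $\{m_{00}=1\}$, where $\mathcal{G}_{2,3}$ becomes the graph of a polynomial map; from this I get irreducibility, the dimension, and the saturation formula (\ref{eq:saturation}), while the remaining quantitative claims follow from one explicit saturation computation.

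First I would write out the parametrization (\ref{eq:gaussian}) for $n=2$, $d=3$: one has $m_{00}=1$ and the five quadratic-or-lower moments $m_{10}=\mu_1$, $m_{01}=\mu_2$, $m_{20}=\mu_1^2+\sigma_{11}$, $m_{11}=\mu_1\mu_2+\sigma_{12}$, $m_{02}=\mu_2^2+\sigma_{22}$, and then the four cubic moments $m_{30}=\mu_1^3+3\mu_1\sigma_{11}$, $m_{21}=\mu_1^2\mu_2+2\mu_1\sigma_{12}+\mu_2\sigma_{11}$, $m_{12}=\mu_1\mu_2^2+2\mu_2\sigma_{12}+\mu_1\sigma_{22}$, $m_{03}=\mu_2^3+3\mu_2\sigma_{22}$. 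The key point is that $(\mu_1,\mu_2,\sigma_{11},\sigma_{12},\sigma_{22})\mapsto(m_{10},m_{01},m_{20},m_{11},m_{02})$ is a polynomial automorphism of $\mathbb{A}^5$, with inverse $\mu_1=m_{10}$, $\mu_2=m_{01}$, $\sigma_{11}=m_{20}-m_{10}^2$, $\sigma_{12}=m_{11}-m_{10}m_{01}$, $\sigma_{22}=m_{02}-m_{01}^2$. Substituting this inverse into the cubic-moment formulas expresses $m_{30},m_{21},m_{12},m_{03}$ as explicit polynomials in $m_{10},m_{01},m_{20},m_{11},m_{02}$, so $\mathcal{G}_{2,3}\cap\{m_{00}=1\}$ is the graph of a polynomial map $\mathbb{A}^5\to\mathbb{A}^4$; in particular it is isomorphic to $\mathbb{A}^5$, hence irreducible, reduced, of dimension $5$.

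Next I would observe that the four cubics $c_1,\dots,c_4$ displayed in (\ref{eq:saturation}) are precisely the homogenizations of these four graph equations --- equivalently, of the univariate recurrence (\ref{eq:recurrence}) applied along each axis and its mixed analogue $m_{i+1,j}=\mu_1m_{ij}+i\sigma_{11}m_{i-1,j}+j\sigma_{12}m_{i,j-1}$ --- which one confirms by a direct substitution, giving $\langle c_1,\dots,c_4\rangle=:J\subseteq\mathcal{I}(\mathcal{G}_{2,3})$. Since the coefficient of the unique cubic moment inside each $c_i$ is $m_{00}^2$, inverting $m_{00}$ lets us solve for $m_{30},m_{21},m_{12},m_{03}$, so $\CC[m_{ij}]_{m_{00}}/J$ is a localization of a polynomial ring, hence a domain; thus $J$ is prime after inverting $m_{00}$, and its contraction $J:\langle m_{00}\rangle^\infty$ is a prime ideal of $\CC[m_{ij}]$. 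Its variety is the closure of $V(J)\cap\{m_{00}\neq0\}$, which equals $\mathcal{G}_{2,3}\cap\{m_{00}\neq0\}$ (both are the graph just described) and is dense in $\mathcal{G}_{2,3}$ because $\mathcal{G}_{2,3}$ is the closure of this nonempty affine slice. A prime ideal is radical, hence determined by its variety, so $J:\langle m_{00}\rangle^\infty=\mathcal{I}(\mathcal{G}_{2,3})$; this establishes (\ref{eq:saturation}) and shows $\mathcal{I}(\mathcal{G}_{2,3})$ is prime of dimension $5$.

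Finally there remain the assertions about the minimal generators ($14$ cubics and $4$ quartics), the Hilbert series $\frac{1+4t+10t^2+6t^3-4t^4-t^5}{(1-t)^{6}}$, and the degree $16$. I would compute the saturation $J:\langle m_{00}\rangle^\infty$ explicitly in a computer algebra system, extract a minimal generating set and a minimal free resolution, and read off the graded Betti numbers and the Hilbert series; the degree is then the value $1+4+10+6-4-1=16$ of the numerator at $t=1$. A few checks are available without the resolution: the Hilbert function of $\CC[m_{ij}]/\mathcal{I}(\mathcal{G}_{2,3})$ starts $1,10,55,206$ in degrees $0,1,2,3$, confirming that $\mathcal{G}_{2,3}$ spans $\PP^9$, that the ideal contains no quadrics ($55=\binom{11}{2}$), and that it contains exactly $14$ independent cubics ($206=\binom{12}{3}-14$), which already forces the $14$ cubic generators. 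I expect this last step to be the real obstacle: in contrast with $\mathcal{G}_{1,d}$, whose ideal is determinantal and Cohen-Macaulay, here the four ``extra'' quartic generators show the ideal is neither determinantal in any obvious way nor a complete intersection, so I see no structural shortcut to the generator count and Hilbert series, and a fully hand-checkable proof would require exhibiting all eighteen generators, verifying directly that they generate $J:\langle m_{00}\rangle^\infty$, and computing their resolution.
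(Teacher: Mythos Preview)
Your proof is correct and follows essentially the same route as the paper: the paper does not give a formal proof but explains that the four cubics in (\ref{eq:saturation}) are the order-$3$ cumulants $k_{30},k_{21},k_{12},k_{03}$, and that the moment-to-cumulant transformation (\ref{eq:explog}) is a polynomial automorphism of $\mathbb{A}^9$ under which $\mathcal{G}_{2,3}\cap\mathbb{A}^9$ becomes the linear space $\{k_{30}=k_{21}=k_{12}=k_{03}=0\}$---this is exactly your ``graph of a polynomial map'' argument, and your $\sigma_{ij}=m_{ij}-m_{i}m_{j}$ are the order-$2$ cumulants. The degree, minimal generators, and Hilbert series are left to computation in the paper just as you propose.
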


The four special cubics in (\ref{eq:saturation}) above are the 
cumulants $k_{30}, k_{21}, k_{12}, k_{03}$
when expressed in terms of moments.
The same technique works for all $n$ and $d$, and
we shall now explain it.

We next define cumulants.
These form a coordinate system that is more  efficient than the
moments, not just for Gaussians but for any probability distribution on $\RR^n$
that is polynomial in the sense of Belkin and Sinha \cite{BS}.
We introduce two exponential generating functions
$$
M \,\,= \! \sum_{i_1,i_2,\ldots,i_n \geq 0}
\frac{m_{i_1 i_2 \cdots i_n}}{i_1 ! i_2 ! \cdots i_n !} 
t_1^{i_1} t_2^{i_2} \cdots t_n^{i_n} \quad \hbox{and} \quad
K \,\,= \! \sum_{i_1,i_2,\ldots,i_n \geq 0}
\frac{k_{i_1 i_2 \cdots i_n}}{i_1 ! i_2 ! \cdots i_n !} 
t_1^{i_1} t_2^{i_2} \cdots t_n^{i_n} .
$$
Fixing $m_{00 \cdots 0} = 1$ and $k_{00 \cdots 0} = 0$, 
these are related by the  identities of generating functions
\begin{equation}
\label{eq:explog}
 M \,=\,{\rm exp}(K) \quad \hbox{and}
\quad K \,=\, {\rm log}(M). 
\end{equation}
The coefficients are unknowns: the $m_{i_1 i_2 \cdots i_n}$
are  {\em moments}, and  the $k_{i_1 i_2 \ldots i_n}$
are  {\em cumulants}. The  integer
$i_1 + i_2 + \cdots + i_n$ is the {\em order} of the moment 
$m_{i_1 i_2 \cdots i_n}$ or the cumulant
$k_{i_1 i_2 \ldots i_n}$.

The identity (\ref{eq:explog}) expresses
moments of order $\leq d$ as polynomials
in cumulants of order $\leq d$, and vice versa.
Either of these can serve as an affine coordinate system
on the $\PP^N$ whose points are inhomogeneous polynomials
of degree $\leq d$ in $n$ variables.
To be precise, the affine space
$ \mathbb{A}^N  = \{m_{00 \cdots 0} = 1\}$ 
consists of those polynomials whose constant term is nonzero.
Hence the formulas (\ref{eq:explog})
represent a non-linear change of variables on $\mathbb{A}^N$.
This was called {\em Cremona linearization} in \cite{CCMRZ}.
We agree with the authors of \cite{CCMRZ} that
passing from $m$-coordinates to $k$-coordinates
usually simplifies the description of interesting varieties  in~$\PP^N$.

We define the {\em affine Gaussian moment variety}
to be the intersection of  $\mathcal{G}_{n,d} $ with the
the affine chart $\mathbb{A}^{N} = \{ m_{00\cdots 0} = 1\}$ in $\PP^N$.
The transformation (\ref{eq:explog}) between
moments and cumulants is an isomorphism.
Under this isomorphism, the affine Gaussian moment variety
is the linear space defined by the vanishing 
of all cumulants of orders $3,4,\ldots,d$. This implies:

\begin{remark}
The affine moment variety
$\,\mathcal{G}_{n,d} \cap \mathbb{A}^N\,$ is  an affine
space of dimension $n + \binom{n+1}{2}$.
\end{remark}

For instance, the $5$-dimensional
affine variety $\mathcal{G}_{2,3} \cap \mathbb{A}^9$ is 
isomorphic to the $5$-dimensional linear space
 defined by $k_{30} = k_{21} = k_{12} = k_{03} = 0$.
This was translated into moments
in (\ref{eq:saturation}).

For the purpose of studying mixtures, the first truly interesting bivariate
case is $d = 4$. Here the affine moment variety 
$\mathcal{G}_{2,4} \cap \mathbb{A}^{14}$ is defined by the vanishing of
the nine cumulants
$$ \begin{matrix}
k_{03}  = & 2 m_{01}^3-3 m_{01} m_{02}+m_{03} \\
k_{12}  = &  2 m_{01}^2 m_{10}-2 m_{01} m_{11}-m_{02} m_{10}+m_{12} \\
k_{21}  = &  2 m_{01} m_{10}^2-m_{01} m_{20} - 2 m_{10}m_{11}+m_{21} \\
k_{30}  = & 2 m_{10}^3-3 m_{10} m_{20}+m_{30} \\
k_{04}  = &  -6 m_{01}^4+12 m_{01}^2 m_{02}-4 m_{01} m_{03}-3 m_{02}^2+m_{04} \\
k_{13}  = & -6 m_{01}^3 m_{10} + 6 m_{01}^2 m_{11}+6 m_{01} m_{02} m_{10}
-3 m_{01} m_{12} -3 m_{02} m_{11} m_{03} m_{10} +m_{13} \\
k_{22}  = &\!\!\! -6 m_{01}^2 m_{10}^2 {+}2 m_{01}^2 m_{20}
{+}8 m_{01} m_{10} m_{11}{+}2 m_{02} m_{10}^2{-}2 m_{01} m_{21}
{-}m_{02} m_{20} {-} 2 m_{10} m_{12} {-} 2 m_{11}^2{+}m_{22} \\
k_{31}  = & -6 m_{01} m_{10}^3+6 m_{01}m_{10}m_{20}
+6m_{10}^2 m_{11}-m_{01} m_{30}-3 m_{10} m_{21}-3 m_{11} m_{20}+m_{31} \\
k_{40}  = & -6 m_{10}^4+12 m_{10}^2 m_{20}-4 m_{10}m_{30}-3 m_{20}^2+m_{40}
\end{matrix}
$$
The ideal of the projective variety $\mathcal{G}_{2,4}$
is obtained from these nine polynomials by
homogenizing and saturating with 
a new unknown $m_{00}$. The result of this computation is as follows.

\begin{proposition} \label{prop:G24}
The $5$-dimensional variety $\mathcal{G}_{2,4}$ has degree $102$ in $\PP^{14}$.
Its prime ideal is minimally generated by
$99$ cubics, $41$ quartics, and one quintic. The Hilbert series equals
$$ \dfrac{1+9t+45t^2+66t^3-27t^4+13t^5-8t^6+4t^7-t^8}{(1-t)^{6}}. $$
\end{proposition}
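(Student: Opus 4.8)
The plan is to obtain the prime ideal of $\mathcal{G}_{2,4}$ by exactly the procedure used for $\mathcal{G}_{2,3}$ in Proposition~\ref{prop:G23}, now with a heavier computation. Since moments and cumulants are related by the polynomial change of coordinates (\ref{eq:explog}), which is an isomorphism of the affine space $\mathbb{A}^{14}=\{m_{00}=1\}$, the ideal generated in moment coordinates by the nine cumulants $k_{03},k_{12},k_{21},k_{30},k_{04},k_{13},k_{22},k_{31},k_{40}$ displayed above is the \emph{full} vanishing ideal of the affine variety $\mathcal{G}_{2,4}\cap\mathbb{A}^{14}$, which moreover is a $5$-dimensional affine space, hence irreducible. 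First I would homogenize those nine polynomials with respect to a new variable $m_{00}$, getting an ideal $\widetilde{J}$ in $\QQ[m_{00},m_{10},m_{01},\ldots,m_{04}]$, and then compute the saturation $I=\widetilde{J}:\langle m_{00}\rangle^\infty$. By the standard fact that saturating the ideal of a homogenized generating set recovers the homogenization of the affine ideal, $I$ is the vanishing ideal of the projective closure $\mathcal{G}_{2,4}$; and since the affine variety is irreducible, this closure is irreducible, so $I$ is automatically prime and carries no embedded components.

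What remains is to extract the numerical invariants from $I$ via a Gr\"obner basis and a minimal free resolution, say in degree reverse lexicographic order in a computer algebra system such as \texttt{Macaulay2} or \texttt{FGb}. From the Hilbert series of $\QQ[m_{ij}]/I$ one reads the denominator $(1-t)^6$, confirming $\dim\mathcal{G}_{2,4}=5$, and the numerator $1+9t+45t^2+66t^3-27t^4+13t^5-8t^6+4t^7-t^8$; the degree of the variety is the value of this numerator at $t=1$, which is $1+9+45+66-27+13-8+4-1=102$. From the graded Betti numbers $\beta_{0,j}$ of the minimal free resolution of $\QQ[m_{ij}]/I$ one then reads off the number of minimal generators in each degree: $\beta_{0,3}=99$, $\beta_{0,4}=41$, $\beta_{0,5}=1$, and $\beta_{0,j}=0$ otherwise. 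Minimality of the $141$ generators is part of this output, since $\beta_{0,j}$ by definition counts minimal generators rather than the size of an arbitrary generating set.

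The hard part will be computational scale rather than any conceptual issue. The ideal $\widetilde{J}$ is very far from saturated, because a large portion of $\mathcal{G}_{2,4}$ lies in the hyperplane at infinity $\{m_{00}=0\}$ (compare the corresponding phenomenon for the surface $\mathcal{G}_{1,d}$ in Proposition~\ref{prop:surface}), so the saturation --- equivalently, the elimination of an auxiliary variable $y$ from $\widetilde{J}+\langle 1-y\,m_{00}\rangle$ in fifteen variables --- is the bottleneck. To keep the Gr\"obner basis and resolution computations tractable I would first run them modulo a large random prime $p$; this pins down the Hilbert function and the graded Betti numbers for all but finitely many $p$, after which it suffices to verify over $\QQ$ that the $99+41+1$ polynomials found actually generate an ideal with the predicted Hilbert function, since $I$ is prime and equality of Hilbert functions between that ideal and $I$ forces them to coincide. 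A useful sanity check is that the coordinate projection forgetting the order-$4$ moments carries $\mathcal{G}_{2,4}$ onto $\mathcal{G}_{2,3}$, so the invariants just computed should be consistent with those in Proposition~\ref{prop:G23}.
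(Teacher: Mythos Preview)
Your proposal is correct and follows exactly the approach the paper indicates: the text immediately preceding Proposition~\ref{prop:G24} says that the ideal of $\mathcal{G}_{2,4}$ is obtained from the nine cumulant polynomials by homogenizing with $m_{00}$ and saturating, and the proposition merely records the output of that computation. Your write-up supplies the justification the paper leaves implicit (why the nine cumulants already cut out the affine ideal, and why saturation yields the prime ideal of the closure); one tiny notational slip is that the minimal generator counts of $I$ are the Betti numbers $\beta_{1,j}$ of $\QQ[m_{ij}]/I$, not $\beta_{0,j}$.
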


We note that the  moment variety $\mathcal{G}_{2,4}$ contains the
{\em quartic Veronese surface} $\nu_4 (\PP^2) $. This surface is defined 
by $75$ binomial quadrics in
$\PP^{14}$, namely the $2 \times 2$-minors of the  matrix
\begin{equation}
\label{eq:sixbysix}
\begin{pmatrix}
m_{00} & m_{01} & m_{02} & m_{10} & m_{11} & m_{20} \\
m_{01} & m_{02} & m_{03} & m_{11} & m_{12} & m_{21} \\
m_{02} & m_{03} & m_{04} & m_{12} & m_{13} & m_{22} \\
m_{10} & m_{11} & m_{12} & m_{20} & m_{21} & m_{30} \\
m_{11} & m_{12} & m_{13} & m_{21} & m_{22} & m_{31} \\
m_{20} & m_{21} & m_{22} & m_{30} & m_{31} & m_{40} 
\end{pmatrix}.
\end{equation}
As observed in \cite[Section 4.3]{CCMRZ},
this is just a linear coordinate space in cumulant coordinates:
$$
\nu_4 (\PP^2) \cap \mathbb{A}^{14}  \, = \,
V(k_{20},k_{11},k_{02},k_{30},k_{21},k_{12},k_{03},k_{40},k_{31},k_{22},k_{13},k_{04})
\, = \, V(k_{20},k_{11},k_{02}) \,\cap \,\mathcal{G}_{2,4}.
$$

The secant variety $\sigma_2(\nu_4(\PP^2))$ comprises all
ternary quartics of tensor rank $\leq 2$. It has dimension
$5$ and degree $75$ in $\PP^{14}$, and its homogeneous prime
ideal is minimally generated by $148$ cubics, namely the
$3 \times 3$-minors of the $6 \times 6$
Hankel matrix in (\ref{eq:sixbysix}). Also this ideal 
becomes much simpler when passing from moments to cumulant coordinates.
Here, the ideal of $\sigma_2(\nu_4(\PP^2)) \cap \mathbb{A}^{14}$
is generated by $36$ binomial quadrics,
like $\,k_{31}^2-k_{22} k_{40}\,$ and $\, k_{30} k_{31}-k_{21} k_{40}$,
along with seven trinomial cubics like
$\,2 k_{20}^3-k_{30}^2+k_{20} k_{40}\,$ and $\,
      2 k_{11} k_{20}^2-k_{21} k_{30}+k_{11} k_{40}$.

\begin{remark} \rm
The Gaussian moment variety $\mathcal{G}_{2,5}$
has dimension $5$ in $\PP^{19}$, and we found its
degree to be $332$. However, at present,
 we do not know a generating set for its prime ideal.
\end{remark}
      
We close this section by reporting the computation of the
first interesting case for $n=3$.

\begin{proposition}
The Gaussian moment variety $\mathcal{G}_{3,3}$ has 
dimension $9$ and degree $130$ in $\PP^{19}$.
Its prime ideal is minimally generated by $84$ cubics, $192$ quartics,
 $21$ quintics, $15$ sextics, $36$ septics, and $35$ octics.
 The Hilbert series equals
 $$
 \frac{ 1{+}10t {+} 55t^2 {+}136t^3 {-}26t^4 {-}150t^5 
 {+}139t^6 {-}127t^7 {+}310t^8 {-}449t^9 {+}360t^{10} {-}160t^{11} 
{+}32t^{12} {-}t^{13}}{(1-t)^{10}}.
$$
 \end{proposition}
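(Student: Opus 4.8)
The proof will be computational, following exactly the recipe already applied above to $\mathcal{G}_{2,3}$ and $\mathcal{G}_{2,4}$. The starting point is the fact recorded just before Proposition~\ref{prop:G24}: after the Cremona linearization (\ref{eq:explog}), the affine Gaussian moment variety $\mathcal{G}_{3,3}\cap\mathbb{A}^{19}$ is the linear space obtained by setting to zero all cumulants of order $3$. There are exactly $\binom{5}{2}=10$ of these, indexed by the triples $(i_1,i_2,i_3)$ with $i_1+i_2+i_3=3$, and each $k_{i_1i_2i_3}=\log(M)_{i_1i_2i_3}$ is a polynomial of the shape $m_{i_1i_2i_3}+(\text{lower-order moment terms})$, computed as in the displayed list of nine cumulants in Section~\ref{sec:two}. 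Homogenizing the ten polynomials $k_{i_1i_2i_3}$ with a new unknown $m_{000}$ gives ten cubics in $\QQ[m_{000},m_{100},\ldots,m_{003}]$; let $J$ be the ideal they generate. The plan is to compute the prime ideal of $\mathcal{G}_{3,3}$ as the saturation $J:\langle m_{000}\rangle^\infty$ and then read off the stated invariants.

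Two structural observations make this rigorous before any machine is switched on. First, dehomogenizing at $m_{000}=1$ returns the ten original cumulant polynomials $k_{i_1i_2i_3}$; in cumulant coordinates these are ten of the coordinate functions, hence they generate the (radical, prime) vanishing ideal of a coordinate subspace, and transferring back to moment coordinates through the polynomial automorphism (\ref{eq:explog}) of $\mathbb{A}^{19}$ they still generate a prime ideal $P$ whose zero set is irreducible of dimension $19-10=9$. Second, for any homogeneous ideal $I\subset\QQ[m_{000},\ldots]$ whose dehomogenization at $m_{000}=1$ equals a prime ideal $P$, the saturation $I:\langle m_{000}\rangle^\infty$ is the homogenization $P^{h}$, which is prime and cuts out the projective closure. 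Applying this with $I=J$ shows that $J:\langle m_{000}\rangle^\infty$ is the homogeneous prime ideal of $\mathcal{G}_{3,3}$ and that $\dim\mathcal{G}_{3,3}=9$.

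It then remains to extract the numerics from the computed prime ideal. From a Gr\"obner basis in the degree reverse lexicographic order one reads the Hilbert series; its numerator evaluated at $t=1$ gives the degree, which should come out to $130$, and the dimension confirms the value $9$. A minimal free resolution yields the number of minimal generators in each degree, producing $84,192,21,15,36,35$ generators in degrees $3,\ldots,8$, consistent with the numerator $1+10t+55t^2+136t^3-26t^4-150t^5+139t^6-127t^7+310t^8-449t^9+360t^{10}-160t^{11}+32t^{12}-t^{13}$ having first nonconstant coefficient $10$ (the ten starting cubics, some of which are non-minimal after saturation is accounted for by the later correction terms).

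The one genuinely hard step is this Gr\"obner basis / saturation / syzygy computation: $J$ lives in a polynomial ring in $20$ variables, and the minimal resolution is long (the Hilbert series numerator has degree $13$), so the calculation is near the boundary of feasibility. I expect it to be made tractable by three ingredients: working throughout in degree reverse lexicographic order with $m_{000}$ last, so that the saturation is obtained essentially for free from the Gr\"obner basis; first running the computation modulo a large prime to obtain and stabilize all numerical invariants, then certifying the generators and resolution over $\QQ$; and exploiting the action of the symmetric group $S_3$ permuting the coordinates $t_1,t_2,t_3$, which preserves $\mathcal{G}_{3,3}$, acts on the ten cubics, and splits every graded piece of the resolution into isotypic components. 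These are exactly the techniques behind the analogous computations for $\mathcal{G}_{2,3}$, $\mathcal{G}_{2,4}$, and $\mathcal{G}_{2,5}$ reported earlier in this section.
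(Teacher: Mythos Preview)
Your approach is correct and is exactly the computational recipe the paper uses (and explicitly illustrates in (\ref{eq:saturation}) for $\mathcal{G}_{2,3}$ and again for $\mathcal{G}_{2,4}$): write down the ten order-$3$ cumulants as polynomials in the moments, homogenize with $m_{000}$, saturate, and read off the invariants from a Gr\"obner basis and minimal free resolution. The paper itself offers no proof beyond reporting the outcome of this computation.

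One small correction to your commentary: the coefficient $10$ of $t$ in the Hilbert series numerator has nothing to do with your ten starting cubics. The low-degree coefficients $1,10,55$ are forced purely by the ambient $\PP^{19}$ and the denominator $(1-t)^{10}$, since no relations exist below degree $3$; the $84$ minimal cubic generators are instead visible as the drop $220-136=84$ at the $t^3$ coefficient (where $220$ is what the numerator would read if there were still no relations in degree $3$).
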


\section{Pearson's Crabs: Algebraic Statistics in 1894}
\label{sec:three}

The method of moments in statistics was introduced by
Pearson in his 1894 paper \cite{Pea}. In our view, this
can be regarded as the beginning of Algebraic Statistics.
In this section we revisit Pearson's computation and
related work of Lazard \cite{Laz}, 
and we extend them further.

The first six moments were expressed in (\ref{eq:karlpara}) in
terms of the parameters. The equation 
$K = {\rm log}(M)$ in (\ref{eq:explog}) 
writes the first  six cumulants 
in terms of the first six moments:
 \begin{equation}
\label{eq:karlpara2}
 \begin{matrix}
k_1 & = & m_1 \\
k_2 & = & m_2 - m_1^2  \\
k_3 & = & m_3 - 3 m_1 m_2 + 2 m_1^3 \\
k_4 & = & m_4 - 4 m_1 m_3 - 3 m_2^2 + 12 m_1^2 m_2 - 6 m_1^4 \\
k_5 & = & m_5 - 5 m_1 m_4 -10 m_2 m_3 + 20 m_1^2 m_3 + 30 m_1 m_2^2 - 60 m_1^3 m_2 + 24 m_1^5 \\
k_6 & = & m_6 - 6m_1 m_5 - 15 m_2 m_4 + 30 m_1^2 m_4 -10 m_3^2 + 120 m_1 m_2 m_3 - 120 m_1^3 m_3 \\
    &  & +30m_2^3 -270 m_1^2 m_2^2 + 360 m_1^4 m_2 - 120 m_1^6 
\end{matrix}
\end{equation}

Pearson's method of moments 
 identifies the parameters in a mixture of two 
univariate Gaussians. Suppose 
the first five moments $m_1,m_2,m_3,m_4,m_5$
are given numerically from data.
Then we obtain numerical values for $k_1,k_2,k_3,k_4,k_5$
from the formulas in (\ref{eq:karlpara2}).
Pearson \cite{Pea} solves the corresponding
five equations in  (\ref{eq:karlpara}) for the
five unknowns $\lambda,\mu,\nu,\sigma,\tau$.
The crucial first step is to find the roots of the
following univariate polynomial of degree $9$ in $p$.

\begin{proposition}
The product of normalized means $p = (\mu - m_1) (\nu - m_1)$ 
satisfies 
\begin{equation}
\label{pearsonpoly}
\begin{matrix}
8p^9+28k_4p^7+12k_3^2p^6+(24k_3k_5+30k_4^2)p^5+(148k_3^2k_4-6k_5^2)p^4 \\
+(96k_3^4+9k_4^3
-36k_3k_4k_5)p^3+(-21k_3^2k_4^2 -24k_3^3k_5)p^2 - 32k_3^4k_4p-8k_3^6 \,\, = \,\, 0.
\end{matrix} 
 \end{equation}
\end{proposition}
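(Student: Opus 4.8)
The plan is to reduce to the centered case and then eliminate the five model parameters from a suitable subsystem. First I would use translation invariance: replacing the random variable $X$ by $X - m_1$ leaves the cumulants $k_2,k_3,k_4,k_5,k_6$ unchanged, sends $k_1$ to $0$, and sends $(\mu,\nu)$ to $(\mu - m_1,\nu - m_1)$, so that $p = (\mu - m_1)(\nu - m_1)$ becomes simply the product of the two means. Hence we may assume $m_1 = k_1 = 0$; the equations in (\ref{eq:karlpara}) then express $m_2,\ldots,m_5$ (equivalently $k_2,\ldots,k_5$ via (\ref{eq:karlpara2})) as polynomials in $\lambda,\mu,\nu,\sigma,\tau$ subject to the single linear relation $\lambda\mu + (1-\lambda)\nu = 0$, and $p = \mu\nu$.

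Next I would explain why $k_2$ does not occur in the answer, which also lightens the computation. Convolving the mixture with a centered Gaussian $N(0,c)$ replaces $(\sigma^2,\tau^2)$ by $(\sigma^2+c,\tau^2+c)$ and leaves $\lambda,\mu,\nu$ — hence $p$ — and every cumulant of order $\geq 3$ untouched, while $k_2 \mapsto k_2 + c$. Thus the parametrization $(\lambda,\mu,\nu,\sigma,\tau)\mapsto(p,k_2,k_3,k_4,k_5)$ has fibers along which $k_2$ alone varies, so the polynomial vanishing on its image is independent of $k_2$. Taking $c = -\min(\sigma^2,\tau^2)$, we may moreover assume one component variance is $0$, i.e.\ reduce to a mixture of a point mass and a single Gaussian; this trims the free parameters to three (say $\mu,\nu,\tau$, with $\lambda$ fixed by $\lambda\mu + (1-\lambda)\nu = 0$).

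Now I would carry out the elimination. Writing $p = \mu\nu$ and the three polynomials for $k_3,k_4,k_5$ in the reduced parameters, I eliminate $\mu,\nu,\tau$ (by a lexicographic Gr\"obner basis, or by iterated resultants) to obtain a single generator of the elimination ideal in $\mathbb{Q}[p,k_3,k_4,k_5]$. Assigning weight $j$ to $k_j$ and weight $2$ to $p$, this generator is weighted-homogeneous; a weight count shows every term has weight $18$, matching (\ref{pearsonpoly}). Finally I would confirm that this generator is exactly (\ref{pearsonpoly}) by substituting the full parametrization (\ref{eq:karlpara}) together with $p = (\mu - m_1)(\nu - m_1)$ and the expressions (\ref{eq:karlpara2}) into (\ref{pearsonpoly}) and checking that it vanishes identically as a rational function of $\lambda,\mu,\nu,\sigma,\tau$; comparing the degree $9$ in $p$ with the degree of the Gr\"obner generator shows there is no extraneous factor, and an irreducibility check confirms that $9$ is minimal, recovering Pearson's nonic.

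The main obstacle is the elimination itself: although it is a finite symbolic computation (and was performed by hand in 1894), it is heavy enough that tracking the cancellations and ruling out spurious factors takes care — which is precisely why the weight bookkeeping and the final direct substitution are worth doing. A secondary point needing a line of justification is that the point-mass-plus-Gaussian subfamily has Zariski-dense image in the $(p,k_3,k_4,k_5)$ coordinates, so that its defining polynomial agrees with that of the full secant variety; alternatively one avoids this by eliminating $\lambda,\mu,\nu,\sigma,\tau$ directly from the five equations $m_1 = 0$, $k_2,\ldots,k_5$ together with $p = \mu\nu$ and simply observing a posteriori that $k_2$ drops out.
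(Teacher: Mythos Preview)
Your proposal is correct but takes a different route from the paper. The paper introduces the second elementary symmetric polynomial $s=\mu+\nu$ alongside $p$, then through a chain of explicit substitutions (via auxiliary quantities $R_1,\ldots,R_5$ built from $\mu,\nu,\sigma,\tau$) derives two polynomial relations in $p,s,k_3,k_4,k_5$ --- the equations (\ref{twoeqns}) --- and obtains Pearson's nonic as their resultant with respect to $s$. You instead exploit convolution with a centered Gaussian to explain \emph{a priori} why $k_2$ must be absent and to set one variance to zero, then run a black-box elimination over three parameters. The paper's route buys the explicit intermediate equations (\ref{twoeqns}) and their consequence (\ref{eq:finds}), which are precisely what one needs to run the method of moments algorithmically (recovering $s$, hence $\mu$ and $\nu$, once $p$ is known) and later to prove Theorem~\ref{thm:thirtynine}; your route buys a conceptual explanation for the absence of $k_2$ and a smaller elimination, at the cost of not producing those auxiliary equations. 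Incidentally, your Zariski-density worry is moot: the convolution argument already shows that the full family and the point-mass-plus-Gaussian subfamily have \emph{identical} images in $(p,k_3,k_4,k_5)$, not merely the same closure.
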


\begin{proof}
We first prove the identity (\ref{pearsonpoly})
under the assumption that the empirical mean is zero:
\begin{equation}
\label{eq:zeromean} m_1\, =\,   \lambda \mu + (1-\lambda) \nu   \,\,= \,\, 0 . 
\end{equation}
In order to work modulo the symmetry that switches the two 
Gaussian components, we replace the
unknown means $\mu$ and $\nu$ by their 
first two elementary symmetric polynomials:
\begin{equation}
\label{eq:karlpara4}
p= \mu \nu \quad  \hbox{and} \quad  s = \mu + \nu . 
\end{equation}
In \cite{Pea}, Pearson applies considerable effort and cleverness  to eliminating
the unknowns $\mu, \nu, \sigma, \tau, \lambda $ from the constraints
(\ref{eq:karlpara}), (\ref{eq:karlpara2}), (\ref{eq:karlpara4}). 
We here offer a derivation that can be checked
easily in a computer algebra system.
We start by solving
(\ref{eq:zeromean}) for $\lambda$. Substituting
\begin{equation}
\label{eq:getlambda}
\lambda \,=\, \frac{-\nu}{\mu - \nu}.
\end{equation} 
 into $k_2 = \lambda (\mu^2 + \sigma^2) + (1-\lambda) (\nu^2 + \tau^2) $,
 we obtain the relation $k_2 = -R_1 - p$, where  
\begin{equation}
\label{eq:getR1}
R_1 \,=\,\frac{\sigma^2 \nu - \tau^2 \mu}{\mu - \nu}.
\end{equation}
This the first of a series of semi-invariants $R_i$ that appear naturally 
when trying to write the cumulant expressions in terms of $p$ and $s$.
In the next instance, by letting 
\begin{equation}
\label{eq:getR2}
R_2\,=\,\frac{\sigma^2 - \tau^2}{\mu - \nu}
\end{equation}
we can write $k_3 = -(3R_2 + s)p$.
In a similar way, we obtain
\begin{equation}
\label{eq:cumR}
 \begin{matrix}
k_4 & =& 3R_3 + p(p-s^2) - 3k_2^2  \\
k_5 & =& 5R_4p - sp(s^2-2p) - 10k_2 k_3 \\
k_6 & =& 15R_5 - p(s^4-3s^2p +p^2) - 15k_2^3
 -15k_2 k_4 - 10k_3^2 \end{matrix}
\end{equation}
where 
\begin{equation}
 \begin{matrix}
R_3 & =& (\mu \sigma^4 - \nu \tau^4 + 2\mu \nu^2 \tau^2 - 2\mu^2 \nu \sigma^2)/(\mu - \nu)  \\
R_4 & =& (3\tau^4 - 3 \sigma^4 + 2\nu^2 \tau^2 - 2 \mu^2 \sigma^2)/(\mu - \nu) \\
R_5 & =& (\mu^4 \nu \sigma^2 - \mu \nu^4 \tau^2 + 3\mu^2 \nu \sigma^4 - 3\mu \nu^2 \tau^4 + \nu \sigma^6 - \mu \tau^6)/(\mu - \nu). \end{matrix}
\end{equation}
It turns out that $R_3,R_4,R_5$ are not independent of $R_1,R_2$.
Namely, we find
\begin{equation}
\label{eq:R3R4R5}
 \begin{matrix}
R_3 & =& R_1^2 + 2pR_1 - 2spR_2 - pR_2^2  \\
R_4 & =& 2sR_1 +6R_1 R_2 + 2(p-s^2)R_2 -3sR_2^2 \\
R_5 & =& -R_1^3 - 3pR_1^2 + (s^2p - p^2)R_1 + 6spR_1 R_2 + 3pR_1 R_2^2 \\
    &   & +(2sp^2 - s^3p)R_2 + (3p^2-3s^2p)R_2^2 - spR_2^2. \end{matrix}
\end{equation}
We now express the three right hand sides
in terms of $p, s, k_2,k_3$ using the relations
\begin{equation}
\label{eq:R1R2}
R_1  \,=\, -k_2 - p \qquad \hbox{and} \qquad
R_2  \,=\, -\dfrac{s}{3} - \dfrac{k_3}{3p}.
\end{equation}
Plugging the resulting expressions for $R_3$ and $R_4$
 into the first two equations of (\ref{eq:cumR}), we get
 \begin{equation}
 \begin{matrix} \label{twoeqns}
-2p^2s^2 - 4spk_3 +6p^3 + 3k_4 p +k_3^2  & =& 0, \\
-2p^2s^3 + 4p^3s + 5sk_3^2 - 20p^2k_3 + 3k_5p & =& 0. \\
\end{matrix}
\end{equation}
Pearson's polynomial (\ref{pearsonpoly}) is 
the resultant of these two polynomials with respect to $s$.

The proof is completed by noting that the entire derivation
is invariant under replacing the parameters for the means $\mu$ and $\nu$
by the normalized means $\nu-m_1$ and $\nu-m_2$.
\end{proof}

\begin{remark} \rm
Gr\"obner bases reveal the following
consequence of the two equations in (\ref{twoeqns}):
\begin{equation}
\label{eq:finds}
 (4 p^3 k_3-4 k_3^3 - 6 p k_3 k_4-2 p^2 k_5) s 
+4 p^5+14 p^2 k_3^2+8 p^3 k_ 4+k_3^2 k_4+3 p k_4^2-2 p k_3 k_5 \,\,=\,\,0.
\end{equation}
This furnishes an 
expression for $s$ as rational function
in the quantities $k_3,k_4,p$.
Note that (\ref{pearsonpoly}) and (\ref{eq:finds}) do not
 depend on $k_2$ at all. The second moment $m_2$ is only involved via $k_4$.
\end{remark}

We next derive the equation of 
the secant variety that was promised in the Introduction.
 
\begin{proof}[Proof of Theorem \ref{thm:thirtynine}]
Using (\ref{eq:R3R4R5}) and (\ref{eq:R1R2}),  the last equation in (\ref{eq:cumR}) translates into
 \begin{equation} 
 \label{thirdeqn}
 \begin{matrix}
-144p^5+(72s^2-270k_2)p^4+(90s^2k_2 +180sk_3 - 4s^4)p^3 \,+ \\ (-135k_2k_4
  + 180sk_2k_3 - 30s^3 k_3 
 - 90k_3^2- 9k_6)p^2 - 30k_3^2 (s^2 + \frac{3}{2}k_2)p + 5sk_3^3 & = & 0.
\end{matrix}
\end{equation}
We now eliminate the unknowns $p$ and $s$ from the three equations
 in (\ref{twoeqns}) and (\ref{thirdeqn}).
After removing an extraneous factor $k_3^3$, we obtain
an irreducible polynomial in $k_3,k_4,k_5,k_6$
of degree $23$ with $195$ terms.
This polynomial is also mentioned in \cite[Proposition 12]{Laz}.

We finally substitute the expressions in (\ref{eq:karlpara2})
to get an inhomogeneous polynomial
in $m_1,m_2,\ldots,m_6$ of degree $39$ with $31154$ terms.
At this point, we
check that this polynomial vanishes at the parametrization (\ref{eq:karlpara}).
 To pass from affine space $\mathbb{A}^6$ to projective space $\PP^6$,
we  introduce the homogenizing variable $m_0$,
by replacing $m_i$ with $m_i/m_0$ for $i=1,2,3,4,5,6$ and clearing denominators.
The degree in each moment $m_i$ is  read off by inspection.
\end{proof}

\begin{remark} \rm
The elimination in the proof above can be carried out by
computing a Gr\"obner basis for the ideal that is obtained by
adding (\ref{thirdeqn}) to the system  (\ref{twoeqns}).
Such a Gr\"obner basis reveals that both $p$ and $s$
can be expressed as rational functions in the cumulants.
This confirms Lazard's result \cite{Laz} that
Gaussian mixtures for $k=2$ and $n=1$ are  rationally identifiable
from their moments up to order six.  We stress that
Lazard \cite{Laz} does much more
than proving rational identifiability: he also provides a very detailed analysis 
of the real structure and special fibers of the map
$(\lambda, \mu,\nu,\sigma,\tau) \mapsto
(m_1,m_2,m_3,m_4,m_5,m_6)$ in (\ref{eq:karlpara}).
\end{remark}

We close this section by stating the
classical method of moments 
and by revisiting Pearson's 
application to crab measurements.
For $k=2,n=1$, the method
works as follows. From the data, compute the
empirical moments $m_1,\ldots,m_5$, and 
derive the cumulants $k_3,k_4,k_5$ via (\ref{eq:karlpara2}).
Next compute the nine complex zeros of the 
Pearson polynomial (\ref{pearsonpoly}).
We are only interested in zeros $p$ that are real and 
non-positive, because $\, (\mu - m_1) (\nu - m_1) \leq 0$.
All other zeros get discarded. For each non-positive zero $p$
of (\ref{pearsonpoly}), compute the corresponding $s$ from (\ref{eq:finds}).
By (\ref{eq:karlpara4}), we obtain $\mu $ and $\nu$ as the two zeros of the equation
$\, x^2 - s x + p = 0$.
The mixture parameter $\lambda$ is given by (\ref{eq:getlambda}).
Finally,  since $R_1$ and $R_2$ are now known by (\ref{eq:R1R2}),
we obtain $\sigma^2$ and $\tau^2$ by solving an inhomogeneous system 
of two linear equations, (\ref{eq:getR1}) and (\ref{eq:getR2}).

The  algorithm in the previous paragraph works
well when $m_1,m_2,m_3,m_4,m_5$ are general enough.
For special values of the empirical moments, however,
one might encounter zero denominators and other degeneracies.
Extra care is needed in those cases. We implemented
a complete method of moments (for $n=1,k=2$) in
the statistics software {\tt R}. Note that what we described above
computes $\mu-m_1, \nu-m_1$, so we should
add $m_1$ to recover $\mu$ and $\nu$.

Pearson \cite{Pea} applied his method to measurements taken from crabs in the Bay of
Naples, which form  different populations.
His data set is the histogram shown in blue in Figure \ref{fig:eins}.

\begin{figure}[ht!]
\centering
\includegraphics[width=100mm]{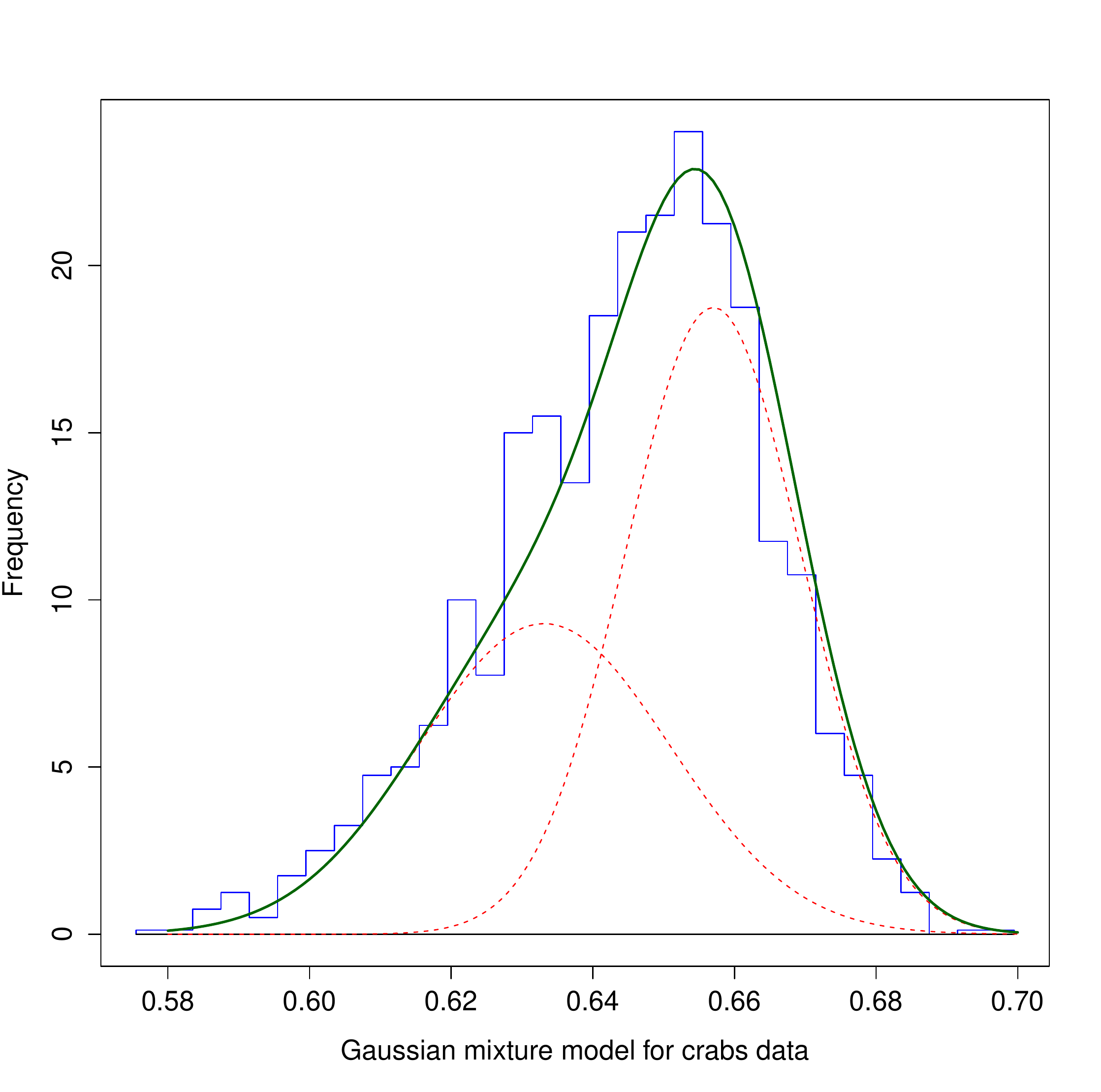}
\caption{\label{fig:eins} 
The crab data in the histogram is approximated by
the mixture of two Gaussians. Pearson's method gives the parameters
 $\mu=0.633, \sigma=0.018, \nu=0.657, \tau=0.012, \lambda=0.414$}
\end{figure} 

Pearson computes the empirical moments from the crab data, and he
takes these as the numerical values for $m_1,m_2,m_3,m_4,m_5$.
The resulting nonic polynomial (\ref{pearsonpoly}) has three real roots,
two of which are non-positive. One computes the model parameters
as above. At this point, Pearson has 
     two statistically meaningful solutions. To choose between them, he  
     computes $m_6$ in each case, and selects the model that is closest to the
     empirical $m_6$. The resulting probability density function
     and  its mixture components are shown in  Figure~\ref{fig:eins}.

\section{Mixtures of Univariate Gaussians}
\label{sec:four}

Our problem is to study the higher secant variety $\sigma_k (\mathcal{G}_{1,d})$
of the moment surface $\mathcal{G}_{1,d} \subset \PP^d$ whose equations were given in
Proposition  \ref{prop:surface}. The hypersurface $\sigma_2 (\mathcal{G}_{1,6})$
was treated in Theorem \ref{thm:thirtynine}. In the derivation of its equation 
in the previous section, we started out with introducing the new unknowns
 $s= \mu + \nu$ and $p=\mu \nu$. 
After introducing cumulant coordinates, the defining expressions for
the moments $m_4, m_5, m_6$ in (\ref{eq:karlpara}) turned into the
 three equations (\ref{twoeqns}),(\ref{thirdeqn}) in $k_2,k_3,k_4,k_5,k_6,s,p$,
 and from these we then eliminated $s$ and $p$.

The implicitization problem for  $\sigma_2 (\mathcal{G}_{1,d})$ when $d>6$ can be
approached with the same process. Starting from the moments, we derive
polynomials in $ k_2,k_3,\ldots,k_d,s,p$ that contain $k_d$ linearly.
The extra polynomial that contains $k_7$ linearly and
is used for $\sigma_2 (\mathcal{G}_{1,7})$ equals
\begin{equation}
\label{eq:for17}
\begin{matrix}
16 p^3 s^5-126 k_2 p^3 s^3+42 k_3 p^2 s^4-148 p^4 s^3+252 k_2 p^4 s
-126 k_3 p^3 s^2 \\ +216 p^5 s  +315 k_2 k_3^2 p s -1260 k_2 k_3 p^3-35 k_3^3 s^2
+210 k_3^2 p^2 s-378 k_3 p^4 \\ +189 k_2 k_5 p^2+35 k_3^3 p+315 k_3 k_4 p^2+9 k_7 p^2.
\end{matrix}
\end{equation}
The extra polynomial that contains $k_8$ linearly and is used for $\sigma_2 (\mathcal{G}_{1,8})$ equals
\begin{equation}
\label{eq:for18}
\begin{matrix}
20 p^4 s^6+336 k_2 p^4 s^4-112 k_3 p^3 s^5+124 p^5 s^4-3780 k_2^2 p^4 s^2
+2520 k_2 k_3 p^3 s^3-6048 k_2 p^5 s^2 \\ - 420 k_3^2 p^2 s^4+2128 k_3 p^4 s^3  
-2232 p^6 s^2-7560 k_2^2 k_3 p^3 s+11340 k_2^2 p^5+2520 k_2 k_3^2 p^2 s^2 
\\ -15120 k_2 k_3 p^4 s +12096 k_2 p^6  -280 k_3^3 p s^3 +2940 k_3^2 p^3 s^2-7056 k_3 p^5 s
+3564 p^7  \\
+1890 k_2^2 k_3^2 p^2  +5670 k_2^2 k_4 p^3 -420 k_2 k_3^3 p s
 +7560 k_2 k_3^2 p^3+35 k_3^4 s^2
+280 k_3^3 p^2 s \\ -1260 k_3^2 p^4  +756 k_2 k_6 p^3-35 k_3^4 p+1512 k_3 k_5 p^3
+945 k_4^2 p^3+27 k_8 p^3.
\end{matrix}
\end{equation}

\begin{proposition}
The ideals of the $5$-dimensional varieties 
$\sigma_2 (\mathcal{G}_{1,7}) \cap \mathbb{A}^{7}$ and 
$\sigma_2 (\mathcal{G}_{1,8}) \cap \mathbb{A}^{8}$
in cumulant coordinates are obtained from (\ref{twoeqns}), (\ref{thirdeqn}),
(\ref{eq:for17}) and (\ref{eq:for18}) by eliminating $s$ and $p$.
\end{proposition}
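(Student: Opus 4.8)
The plan is to repeat verbatim the derivation of Theorem~\ref{thm:thirtynine} from Section~\ref{sec:three}, now carrying the moment generating function two orders further. As there, we may assume the empirical mean vanishes, $m_1 = 0$: cumulants of order $\geq 2$ are translation invariant, so the equations in the $k_2,\dots,k_d$ coordinates are unaffected, while $k_1$ stays a free coordinate recording the mean. Solving (\ref{eq:zeromean}) for $\lambda$ via (\ref{eq:getlambda}) and substituting into the expansions of $k_2,\dots,k_7$ (resp. $k_2,\dots,k_8$) in moments, which extend (\ref{eq:karlpara2}), one introduces, exactly in the manner of (\ref{eq:getR1}), (\ref{eq:getR2}) and (\ref{eq:cumR}), the semi-invariant $R_6$ (resp. also $R_7$) of the appropriate order, defined by the same pattern as $R_3,R_4,R_5$, so that $k_7$ (resp. $k_8$) becomes linear in $R_6$ (resp. $R_7$) modulo a polynomial in $p,s$ and corrections from lower-order cumulants.

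The one genuinely new algebraic input is the analogue of (\ref{eq:R3R4R5}): the semi-invariants $R_6$ and $R_7$ are again polynomials in $R_1,R_2,p,s$. This is a finite, mechanical check in a computer algebra system. Substituting the identities (\ref{eq:R1R2}), namely $R_1 = -k_2-p$ and $R_2 = -s/3 - k_3/(3p)$, and clearing the resulting power of $p$ from the denominator, turns the relation for $k_7$ into (\ref{eq:for17}) and the relation for $k_8$ into (\ref{eq:for18}). One then checks directly, as in the proof of Theorem~\ref{thm:thirtynine}, that (\ref{twoeqns}), (\ref{thirdeqn}) and (\ref{eq:for17}) (resp. together with (\ref{eq:for18})) hold identically on the parametrization (\ref{eq:karlpara}) extended to $m_7$ (resp. $m_8$); this gives the inclusion of the elimination ideal in the vanishing ideal of the secant variety.

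For the reverse inclusion, let $L_d \subset \QQ[s,p,k_2,\dots,k_d]$ be the ideal generated by the listed polynomials. Each generator is linear in one of the top variables $k_4,k_5,k_6,k_7,k_8$ with leading coefficient a power of $p$, so over $\{p \neq 0\}$ the scheme $V(L_d)$ is the graph of the morphism $(p,s,k_2,k_3)\mapsto(k_4,\dots,k_d)$, irreducible of dimension $4$; its image under the projection forgetting $s$ and $p$ is dense in $\sigma_2(\mathcal{G}_{1,d})\cap\{k_1 = 0\}$. That image is $4$-dimensional — so $\sigma_2(\mathcal{G}_{1,d})\cap\mathbb{A}^d$ has the asserted dimension $5$ — because it maps onto the hypersurface of Theorem~\ref{thm:thirtynine} under the coordinate projection to $\PP^6$, while the parametrization (\ref{eq:karlpara}) has only five free parameters (cf. \cite{Yak}). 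Consequently the prime ideal of $\sigma_2(\mathcal{G}_{1,d})\cap\mathbb{A}^d$ is the elimination ideal $(L_d:p^\infty)\cap\QQ[k_2,\dots,k_d]$ with $k_1$ adjoined freely; equivalently, one eliminates $s,p$ from $L_d$ and strips off the extraneous factors supported on $\{k_3 = 0\}$, just as the factor $k_3^3$ was removed in the proof of Theorem~\ref{thm:thirtynine}.

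I expect this last bookkeeping to be the main obstacle. The naive elimination ideal of $L_d$ is strictly larger than $L_d:p^\infty$, since on $V(L_d)$ the first equation of (\ref{twoeqns}) forces only $k_3^2 \in (p)$ while every remaining generator lies in $(p)+(k_3)$, so $V(L_d)$ carries a whole $\{k_3 = 0\}$-worth of spurious points over $p = 0$; one must therefore saturate by $p$ before eliminating, or factor this component out afterwards. Moreover, although irreducibility of the image guarantees that the radical of the saturated elimination ideal is prime, confirming that this ideal is already radical — and hence equals the prime ideal of the variety — is exactly the step for which the Gr\"obner basis computation behind the Proposition is needed.
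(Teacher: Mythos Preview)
The paper offers no proof for this proposition: it simply states the result, treating it as the direct computational analogue of the derivation in Section~\ref{sec:three}, with the equations (\ref{eq:for17}) and (\ref{eq:for18}) presented as the outcome of extending that derivation two steps further. Your write-up is therefore not competing against any argument in the paper; rather, you are supplying the justification the paper omits.

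Your outline is correct and hits the right points: the derivation of (\ref{eq:for17}) and (\ref{eq:for18}) via new semi-invariants $R_6,R_7$ expressed in $R_1,R_2,p,s$; the vanishing on the parametrization; and the reverse inclusion via the graph description over $\{p\neq 0\}$. The identification of the need to saturate by $p$ (equivalently, to strip $k_3$-supported components after elimination) is exactly the subtlety the paper glosses over when it says ``by eliminating $s$ and $p$,'' and it parallels the removal of the factor $k_3^3$ in the proof of Theorem~\ref{thm:thirtynine}.

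One small overcaution in your last paragraph: once you saturate, no separate radicality check is needed. Since each generator of $L_d$ is linear in a distinct $k_j$ with leading coefficient a power of $p$, the localization $L_d\otimes\QQ[p^{-1}]$ presents the quotient as $\QQ[p,p^{-1},s,k_2,k_3]$, a domain; hence $(L_d:p^\infty)$ is already prime, and the contraction of a prime ideal under elimination is prime. So $(L_d:p^\infty)\cap\QQ[k_2,\dots,k_d]$ is automatically the prime ideal of $\sigma_2(\mathcal{G}_{1,d})\cap\{k_1=0\}$, and the Gr\"obner computation is needed only to exhibit generators, not to certify primality.
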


The polynomials above represent a sequence of birational maps
$\,\sigma_2 (\mathcal{G}_{1,d}) \dashrightarrow \sigma_2 (\mathcal{G}_{1,d-1})$,
which allow us to recover all cumulants from earlier cumulants and the parameters $p$
and $s$. In particular, by solving the equation (\ref{pearsonpoly}) for $p$
and then recovering $s$ from (\ref{eq:finds}), we can invert the
parametrization for any of the moment varieties $\sigma_2(\mathcal{G}_{1,d})
\subset \PP^d$. If we are given $m_1,m_2,m_3,m_4,m_5$ from data  then 
we expect $18 = 9 \times 2$ complex solutions $(\lambda,\mu,\nu,\sigma,\tau)$.
The extra factor of $2$ comes from label swapping between the two Gaussians. 
In that sense, the number $9$ is the algebraic degree of
the identifiability problem  for $n=1$ and $k=2$.

We next move on to $k=3$. There are now eight model parameters.
These are mapped to $\PP^8$ with coordinates $(m_0:m_1:\cdots:m_8)$,
and we are interested in the degree of that map.

Working in cumulant coordinates
as in Section \ref{sec:three}, and
using the Gr\"obner basis package {\tt FGb} in {\tt maple},
we computed the degree of that map. It turned out to be
$ 1350  = 3! \cdot 225$.

\begin{theorem}
The mixture model of $\,k=3$ univariate Gaussians is algebraically identifiable
from its first eight moments.
The algebraic degree of this identifiability problem
equals $225$.
\end{theorem}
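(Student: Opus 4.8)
The plan is to carry out, for three components, the same elimination strategy that produced Pearson's polynomial (\ref{pearsonpoly}) and Theorem~\ref{thm:thirtynine}. By the isomorphism (\ref{eq:explog}) between moments and cumulants on the chart $\{m_0=1\}$, it suffices to compute the degree of the map $\Phi$ sending the eight parameters $(\lambda_1,\lambda_2,\mu_1,\mu_2,\mu_3,\sigma_1^2,\sigma_2^2,\sigma_3^2)$, where $\lambda_3=1-\lambda_1-\lambda_2$, to the cumulants $(k_1,\dots,k_8)$. The group $S_3$ permuting the three components acts on every fibre of $\Phi$ and freely on a generic one, since the three means are then distinct; hence $6$ divides $\deg\Phi$, and the algebraic degree of the identifiability problem is $\deg\Phi/6$. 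Translation invariance allows the normalization $k_1=m_1=\sum_i\lambda_i\mu_i=0$ without affecting $\deg\Phi$; together with $\sum_i\lambda_i=1$ this is a pair of linear equations in the weights, so after solving for $\lambda_1,\lambda_2$ one is left with the seven unknowns $\mu_1,\mu_2,\mu_3,\sigma_1^2,\sigma_2^2,\sigma_3^2,\lambda_3$ mapping to the seven cumulants $k_2,\dots,k_8$.

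Following Section~\ref{sec:three}, the next step is to eliminate the $S_3$-symmetry already at the level of coordinates. In place of $s=\mu+\nu$ and $p=\mu\nu$ one uses the elementary symmetric functions $e_1,e_2,e_3$ of $\mu_1,\mu_2,\mu_3$, and one introduces semi-invariants $R_i$ of the same flavour as (\ref{eq:getR1}) and (\ref{eq:getR2}): these are symmetric functions of the variances weighted by the coefficients that appear when solving $k_1=0$, exactly as in the passage rewriting $k_2$ as $-R_1-p$ and $k_3$ as $-(3R_2+s)p$. Each cumulant $k_j$ then becomes a polynomial in $e_1,e_2,e_3$, in the $R_i$, and in the lower cumulants, as in (\ref{eq:cumR}); and, just as $R_3,R_4,R_5$ were expressed through $R_1,R_2$ in (\ref{eq:R3R4R5}), only finitely many $R_i$ are algebraically independent, the remaining ones satisfying explicit polynomial relations. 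Substituting these relations, together with the analogue of (\ref{eq:R1R2}) expressing the independent $R_i$ through the $e_j$ and the low cumulants, yields a polynomial system relating $k_2,\dots,k_8$ to a bounded set of auxiliary unknowns --- the direct analogue of the system (\ref{twoeqns}), (\ref{thirdeqn}), (\ref{eq:for17}), (\ref{eq:for18}) used for $k=2$.

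From this system the two assertions are read off from a single Gr\"obner basis computation, performed with {\tt FGb} in {\tt maple}. Eliminating all auxiliary unknowns, one first checks that the resulting ideal in $k_2,\dots,k_8$ is the zero ideal, i.e.\ that $\Phi$ is dominant; since domain and target both have dimension $8$, a generic fibre is then finite and nonempty, which is precisely algebraic identifiability (equivalently, the Jacobian of the $k=3$ analogue of (\ref{eq:karlpara}) has rank $8$ at a generic point). Second, the degree of that elimination ideal --- equivalently the number of complex solutions $(\lambda_i,\mu_j,\sigma_\ell^2)$ lying over a generic rational choice of $k_2,\dots,k_8$ --- is found to equal $1350$, and dividing by $|S_3|=6$ gives the algebraic degree $225$.

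The main obstacle is the size of the elimination: a straight Gr\"obner basis over all seven auxiliary unknowns is prohibitive, so one should exploit the triangular structure, solving for $k_8,k_7,\dots$ linearly in turn as is already visible in (\ref{eq:for17}) and (\ref{eq:for18}), use a block order eliminating the $e_j$ and $R_i$ last, and certify the count $1350$ first over a finite field before confirming it in characteristic zero. A secondary point requiring care is to verify that the generic fibre is reduced and consists of exactly $225$ genuine $S_3$-orbits, with no orbit collapsing and no unexpected stabiliser; this is confirmed by exhibiting one explicit fibre over rational data containing $1350$ distinct points.
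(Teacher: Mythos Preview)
Your proposal is correct and follows essentially the same approach as the paper: pass to cumulant coordinates, symmetrize in the means via $e_1,e_2,e_3$, and carry out a Gr\"obner basis computation with {\tt FGb} in {\tt maple} to find that the parametrization has degree $1350=3!\cdot 225$. The paper's account is in fact terser than yours---it simply reports the computed degree and then separately exhibits a lexicographic Gr\"obner basis $\{f(e_1),\,e_2-g(e_1),\,e_3-h(e_1)\}$ in $\mathbb{R}[e_1,e_2,e_3]$ for generic numerical moments, with $\deg f=225$---whereas you spell out the intermediate role of the semi-invariants $R_i$ and the care needed to certify reducedness of the generic fibre; but the underlying method is the same.
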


We also computed a generalized Pearson polynomial of degree $225$ for $k=3$.
Namely, we replace the three means $\mu_1,\mu_2,\mu_3$ by their elementary symmetric
polynomials  $e_1=\mu_1 + \mu_2 + \mu_3$, $e_2 = \mu_1 \mu_2 + \mu_1 \mu_3 + \mu_2 \mu_3$ 
and $e_3 = \mu_1 \mu_2 \mu_3$. This is done by a derivation analogous
to (\ref{eq:getR2})-(\ref{twoeqns}). This allows us to eliminate
all model parameters other than $e_1,e_2,e_3$.
The details and further computational results will be presented in a forthcoming article.

  We compute a lexicographic
Gr\"obner basis $\mathcal{G}$ for the above equations in $\mathbb{R}[e_1,e_2,e_3]$,
with generic numerical values of the eight moments $m_1,\ldots,m_8$.
 It has  the expected shape
$$  \mathcal{G} \,=\,
\bigl\{ f(e_1), e_2-g(e_1), e_3 - h(e_1) \bigr\}. $$
Here $f,g,h$ are univariate polynomials of degrees 
$225,224,224$ respectively. In particular,
$f$ is the promised generalized Pearson polynomial of degree $225$
for mixtures of three Gaussians.

For general $k$, the mixture model has $3k-1$ parameters. 
Based on what we know for $k=2$ and $k=3$, we offer the following conjecture
concerning the identifiability of Gaussian mixtures. Recall that the
{\em double-factorial} is the product of the smallest odd positive integers:
$$ (2k-1) !! \,\,\, = \,\,\, 1 \cdot 3 \cdot 5 \cdot \cdots \, (2k-1) . $$

\begin{conjecture} \label{conjidentif}
The mixture model of $k$ univariate Gaussians is algebraically identifiable 
by the moments of order $\leq 3k-1$, and the degree of this
identifiability problem equals $\bigl( \,(2k-1)!! \,\bigr)^2$. 
Moreover, this model is rationally identifiable by the moments of order $\leq 3k$.
\end{conjecture}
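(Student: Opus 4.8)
The plan is to push the elimination strategy of Section~\ref{sec:three} --- which is exactly Pearson's computation for $k=2$ and what we carried out by machine for $k=3$ --- to general $k$, and to read off all three assertions from the structure of the resulting system. Working modulo the $S_k$-symmetry that permutes the components, I would take as coordinates on the parameter space (mod $S_k$) the $k$ elementary symmetric functions $e_1,\dots,e_k$ of the means together with $2k-1$ ``semi-invariants'' $R_1,\dots,R_{2k-1}$, symmetric in the triples $(\mu_j,\sigma_j^2,\lambda_j)$, generalizing the $R_i$ of~(\ref{eq:getR1})--(\ref{eq:R1R2}); these $3k-1$ quantities are a rational coordinate system on the base. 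Passing to cumulant coordinates via the Cremona linearization~(\ref{eq:explog}), the equations $k_1=\cdots=k_{3k-1}=(\text{data})$ should separate, as in Section~\ref{sec:three}, into a lower block that solves linearly for $R_1,\dots,R_{2k-1}$ in terms of the $e_i$ and the cumulants, and an upper block of $k$ polynomial equations in $e_1,\dots,e_k$ alone: the generalized Pearson system. Adjoining the order-$3k$ cumulant equation gives one further relation, playing the role of~(\ref{thirdeqn}).

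The uniform input that makes this go through is that $M(t)=\sum_j\lambda_j\exp(\mu_j t+\tfrac12\sigma_j^2 t^2)$ is $D$-finite: each summand is killed by the first-order operator $\partial_t-\mu_j-\sigma_j^2 t$, hence $M$ is killed by the order-$k$ operator that is their least common left multiple, whose coefficients are polynomials in $t$ with coefficients symmetric in the $(\mu_j,\sigma_j^2)$. Extracting the coefficient of each power of $t$ turns this into linear recurrences among the moments --- for $k=1$ this is precisely~(\ref{eq:recurrence}) --- and those recurrences are what produce the upper block. I would use this description to check that the number of equations matches the number of unknowns at orders $3k-1$ and $3k$, and to verify that the parametrization is dominant onto $\PP^{3k-1}$.

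For the identifiability claims: algebraic identifiability by moments of order $\le 3k-1$ is, by Terracini's lemma, equivalent to $\sigma_k(\mathcal{G}_{1,3k-1})=\PP^{3k-1}$, i.e.\ to the statement that the expected dimension~(\ref{eq:mixturedim}), which is $3k-1$, is already attained at $d=3k-1$; here I would compute the span of $k$ general tangent $2$-planes to the moment surface using its explicit equations (Proposition~\ref{prop:surface}), ideally setting this up as an induction on $k$ that adds one component and three new moments at a time. Rational identifiability by moments of order $\le 3k$ means the parameters mod $S_k$ are rational in $m_1,\dots,m_{3k}$; concretely I would show that the generalized Pearson system together with the order-$3k$ equation has a Gr\"obner basis in shape-lemma position whose last variable is recovered by a \emph{linear} polynomial, with every other $e_i$ and every $R_i$ then an explicit rational function --- the analogue of~(\ref{eq:finds}) and of Lazard's theorem~\cite{Laz} for $k=2$.

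The main obstacle is the degree $((2k-1)!!)^2$. I see two routes. One is a direct enumerative computation: determine the multidegrees (or Newton polytopes) of the $k$ equations of the generalized Pearson system and show that the degree of the univariate polynomial obtained by eliminating $e_2,\dots,e_k$ --- via successive resultants, as in the passage from~(\ref{twoeqns}) to~(\ref{pearsonpoly}) --- is exactly $((2k-1)!!)^2$. The route I find more promising is induction on $k$, with base case the elementary fact that a single Gaussian is determined by $m_1,m_2$: since $((2k-1)!!)^2=(2k-1)^2\,((2k-3)!!)^2$, it suffices to show that adjoining the $k$-th Gaussian component multiplies the algebraic degree by exactly $(2k-1)^2$, i.e.\ that the new component contributes, through one resultant step in the new symmetric-function variable, two conjugate factors each of degree $2k-1$. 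Identifying where the odd number $2k-1$ enters --- presumably from the coupling of the $k$-th mean and the $k$-th variance in the order-$(3k-2)$ and order-$(3k-1)$ recurrences, mirroring the way $3=2\cdot 2-1$ governs the bidegrees in~(\ref{twoeqns}) --- is the crux, and is the step for which I do not yet have a complete argument; this is why the statement stands as a conjecture.
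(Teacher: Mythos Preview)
There is nothing in the paper to compare your proposal to: the statement is labeled a \emph{Conjecture}, and the paper offers no proof. What the paper does supply is verification for $k=2$ (via Pearson and Lazard) and a machine computation of the degree $225$ for $k=3$; the authors explicitly call the double-factorial degree formula ``nothing but a wild guess,'' note that rational identifiability is open already for $k=3$, and say that $k=4$ is currently out of reach.

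Your proposal is a reasonable research outline, and you rightly flag the degree step as incomplete. But the gaps are broader than that one step. First, the existence of $2k-1$ semi-invariants $R_1,\dots,R_{2k-1}$ which, together with $e_1,\dots,e_k$, give a rational coordinate system on the parameter space modulo $S_k$ is asserted, not established; already for $k=2$ the $R_i$ in the paper arise ad hoc and satisfy nontrivial algebraic relations~(\ref{eq:R3R4R5}), so neither the correct count nor the definition is clear for general $k$. Second, the claimed block-triangular structure --- a lower block linear in the $R_i$ and an upper block of exactly $k$ equations in the $e_i$ alone --- is unproven; your $D$-finiteness observation is correct and attractive, but it yields linear recurrences among the \emph{moments} with coefficients symmetric in the $(\mu_j,\sigma_j^2)$, not directly a triangular system in your chosen coordinates, and the passage from one to the other is where the real work lies. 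Third, both the Terracini argument for algebraic identifiability and the shape-lemma argument for rational identifiability are stated as intentions (``I would compute'', ``I would show'') rather than carried out; in particular, an induction on $k$ that adds one component and three moments at a time must control how the new tangent plane interacts with the span of the previous ones, and nothing in your sketch does that. In short, none of the three assertions is actually established by your proposal --- which is consistent with the paper's own position that this is an open conjecture.
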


Geometrically, this conjecture states that
the moment variety  $\sigma_k (\mathcal{G}_{1,3k-1}) $ fills the
ambient space $ \PP^{3k-1}$, and that  $\sigma_k (\mathcal{G}_{1,3k}) $ is
a hypersurface in $ \PP^{3k}$ whose secant parametrization is birational.
As explained in the Introduction, a priori we only know that the dimension of 
$\sigma_k (\mathcal{G}_{1,d}) $ is equal to $3k-1$ for $ d \gg 0$. What Conjecture 
\ref{conjidentif} implies is that this already holds for $d=3k-1$, so that the secant 
varieties always have the expected dimension. We know 
this result for $k=2$ by the work of Pearson \cite{Pea} and Lazard \cite{Laz}
that was discussed in Section \ref{sec:three}. For $k=3$ we verified the first part
of the conjecture, but we do not yet know whether rational identifiability holds.
Also, we do not know the degree of the hypersurface
$\sigma_3( \mathcal{G}_{1,9}) \subset \PP^9$.
The double-factorial conjecture for the degree is nothing but a wild guess. 

Computations for $k=4$ appear currently out of reach for Gr\"obner basis methods.
If our wild guess is true then 
the expected number of complex solutions for the $11$ moment equations 
whose solution identifies a mixture of $k=4$ univariate Gaussians is $105^2 \times 4! = 264,600$.

\section{Method of Moments versus Maximum Likelihood}
\label{sec:five}

In \cite[Section 3]{CB}, the sample consisting of the following $N=2K$ data points was examined:
\begin{equation}
\label{eq:specialdata}
 1,\,1.2,\,2,\,2.2,\,3,\,3.2,\,4,\ldots,K,\,K+0.2 \qquad \hbox{(for $K>1$).} 
 \end{equation}
 Its main purpose was to show that, unlike most models studied in Algebraic Statistics, there is no notion of maximum likelihood degree (or {\em ML degree}; see \cite{DSS}) for a mixture of two Gaussians. 
 Indeed, the particular sample in (\ref{eq:specialdata})
 has the property that, as $K$ increases, the number of critical points of the log-likelihood function grows without any bound. More precisely, for each `cluster' or pair $(k,k+0.2)$, one can find a non-trivial critical point $(\hat{\lambda},\hat{\mu},\hat{\nu},\hat{\sigma},\hat{\tau})$
 of the likelihood equations such that the mean estimate $\hat{\mu}$ lies between them.

In this section we apply Pearson's method of moments to this sample. 
The special nature of the data raises some interesting considerations.
As we shall see, the even
spacing of the points in the list (\ref{eq:specialdata})
 implies that all empirical cumulants of odd order $\geq 3 $ vanish:
\begin{equation}
\label{eq:oddcumulants}
 k_3 \,=\, k_5 \,=\, k_7 \,=\, k_9 \,=\, \cdots  \,=\, 0 .
\end{equation}
Let us analyze what happens when applying the method of moments to \textit{any} sample 
that satisfies (\ref{eq:oddcumulants}).
Under this hypothesis Pearson's polynomial (\ref{pearsonpoly}) factors
as follows:
\begin{equation}
\label{eq:smallpearson}
8p^9 + 28p^7k_4 + 30p^5k_4^2 + 9p^3k_4^3 \,\,  = \,\,
8p^3\left(p^2 + \frac{3}{2}k_4 \right)^2 \left( p^2+\frac{1}{2}k_4 \right)
\,\,=\,\,0.
\end{equation}
Recall that $p$ represents $p=(\mu - m_1) (\nu - m_1)$.
The first root of the Pearson polynomial is $p = 0$.
This implies $m_1 = \mu$ or $m_1 = \nu$. 
Since $m_1$ is the mean of $\mu$ and $\nu$, we conclude
that the means are equal: $m_1 = \mu = \nu$.  However, the 
equal-means model cannot be recovered from the first five moments.
To see this, note that the equations for cumulants $k_1=0$, $k_3=0$ and $k_5=0$ become $0=0$, yielding no information on the remaining three parameters.

If we assume that also the sixth moment $m_6$ is known from the data, then 
the parameters can be identified.  The original system (\ref{eq:karlpara})
 under the equal-means model $\mu=\nu=0$ equals
\begin{equation}
 \begin{matrix}
m_2 & = & \lambda \sigma^2 + (1-\lambda) \tau^2 \\
m_4 & = & 3 \lambda  \sigma^4
          + 3 (1-\lambda)  \tau^4 \\
m_6 & = & 15 \lambda  \sigma^6 
          + 15 (1-\lambda) \tau^6.
\end{matrix}
\end{equation}
After some rewriting and elimination:
 \begin{equation} \label{eqmeansol}
 \begin{matrix}
\lambda (\sigma^2 - \tau^2) & = & k_2 - \tau^2 \\
5k_4 (\sigma^2 + \tau^2) & = & 10k_2 k_4 + k_6 \\
15k_4 (\sigma^2 \tau^2) & = & 3k_2 k_6 + 15k_2^2 k_4 - 5k_4^2.
\end{matrix}
\end{equation}
Assuming $k_4 \not = 0$, this system can be solved easily in radicals 
for $\lambda, \sigma, \tau$.

If $k_4 \geq 0$ then $p=0$ is the only real zero of (\ref{eq:smallpearson}).
If $k_4 < 0$ then two other solutions are:
\begin{equation}
\label{eq:zweip}
p\,=\,-\sqrt{\frac{-3}{2}k_4} \quad \hbox{and} \quad p\,=\,-\sqrt{\frac{-1}{2}k_4}.
\end{equation}  
Note that $p$ must be negative because it is the product
of the two normalized means.

The mean of the sample in (\ref{eq:specialdata}) is $\,m_1 \,= K/2+3/5$.
The central moments are
\begin{equation}
\label{eq:samplemoments}
 m_r \quad = \quad \frac{1}{2K} \cdot \biggl( 
\sum_{i=1}^K \bigl(i-m_1 \bigr)^r \, + \,
\sum_{i=1}^K \bigl(i-m_1 + \frac{1}{5} \bigr)^r \biggr) 
\qquad \hbox{for} \,\, r = 2,3,4,\ldots. 
\end{equation}
This expression is a polynomial of degree $r$ in $K$.
That polynomial is zero when $r$ is odd.
Using (\ref{eq:karlpara2}), this implies 
the vanishing of the odd sample cumulants
(\ref{eq:oddcumulants}). For even $r$, we get
$$ \begin{matrix}
m_2  = \frac{1}{12}K^2-\frac{11}{150}, \,\,\,\,
m_4  =  \frac{1}{80}K^4 - \frac{11}{300}K^2 + \frac{91}{3750},\,\,\,\,
m_6 =  \frac{1}{448}K^6 - \frac{11}{800}K^4
+ \frac{91}{3000}K^2- \frac{12347}{656250}.
\end{matrix}
$$
These polynomials simplify to binomials when we substitute
the moments into (\ref{eq:karlpara2}):
\begin{equation}
\label{eq:evencumulants}
k_1 = m_1 =  \frac{K}{2}+0.6, \hspace{0.5 cm} k_2 = \frac{K^2}{12}-\frac{11}{150}, \hspace{0.5 cm} 
k_4 = -\frac{K^4}{120} + \frac{61}{7500}, \hspace{0.5 cm} k_6 = \frac{K^6}{252} - \frac{7781}{1968750}.
\end{equation}
These are the sample cumulants. We are aware that
these are biased estimators, and  {\em k-statistics} might be preferable.
However,  for simplicity, we shall use (\ref{eq:evencumulants}) in our derivation.

Since $K \geq 1$, we have $k_4<0$ in (\ref{eq:evencumulants}).
Hence the Pearson polynomial has three distinct real roots.
For $p=0$, substituting (\ref{eq:oddcumulants}) and
(\ref{eq:evencumulants}) into (\ref{eqmeansol}) shows that, for every value of 
$K$, there are no positive real solutions for both $\sigma$ and $\tau$. Thus the method of moments concludes that the sample does \textit{not} come from a mixture of two Gaussians with the same mean.

Next we consider the two other roots in (\ref{eq:zweip}).
To recover the corresponding $s$-values, we use
 the system (\ref{twoeqns}) with all odd cumulants replaced by zero:
\begin{equation}\label{eq:karlpara3}
 \begin{matrix}
p(6p^2 - 2s^2p + 3k_4) & =& 0 \\
2sp^2(2p-s^2) & =& 0 \\
\end{matrix}
\end{equation}
For $p=-\sqrt{\frac{-3}{2}k_4}$, the first equation gives $s \not= 0$,
and  the second yields a non-real value for $s$, so this is not viable.
For $ p=-\sqrt{\frac{-1}{2}k_4}$, we obtain $s=0$,
and this is now a valid solution.

In conclusion, Pearson's method of moments infers a
 non-equal-means model for the data  (\ref{eq:specialdata}).
  Using central moments, i.e.~after
subtracting $m_1 = K/2 + 3/5$ from each data point, we find
 $\mu=-\nu = \sqrt[4]{\frac{-k_4}{2}}$.
   These values lead  to $\lambda=\frac{1}{2}$ and $\sigma = \tau$. The final estimate is
\begin{equation} \label{estimatorMOM}
(\lambda,\mu,\sigma^2,\nu,\tau^2) \,\,=\,\, \left( \frac{1}{2}\,,\,m_1 - \sqrt[4]{\frac{-k_4}{2}}
\,,\, k_2-\sqrt{\frac{-k_4}{2}}\,,\, m_1 + \sqrt[4]{\frac{-k_4}{2}}\,,\,k_2-\sqrt{\frac{-k_4}{2}} \,\right). 
\end{equation}

We are now in a position to compare this estimate
to those found by maximum likelihood.

\begin{example}(Example 2 of \cite{CB} with $K=7$)
\label{ex:K7} \rm
The sample consists of the $14$ data points 1,1.2,2,2.2,3,3.2,4,4.2,5,5.2,6,6.2,7,7.2. 
The method of moments estimator  (\ref{estimatorMOM}) evaluates to
$$ (\lambda,\mu,\sigma,\nu,\tau) \,=\, \left(\, \frac{1}{2}\,,\,\frac{41-\sqrt[4]{100001}}{10}, \frac{\sqrt{401-\sqrt{100001}}}{10}, \frac{41+\sqrt[4]{100001}}{10}, \frac{\sqrt{401-\sqrt{100001}}}{10} \right). $$

For general $k_3,k_4,k_5$, Pearson's equation (\ref{pearsonpoly}) of 
degree $9$ cannot be solved in radicals,
as its roots are algebraic numbers with Galois group $S_9$ over $\mathbb{Q}$.
However, for our special data, the algebraic degree 
of the solution drops, and we could write the estimate in radicals.

The situation is dramatically different for likelihood inference.
It was shown in \cite{CB} that the critical points
for the likelihood function of the mixture of two Gaussians with
data (\ref{eq:specialdata}) have transcendental coordinates,
and that the number of these critical points grows with $K$.

It is thus interesting to assess the quality of our solution 
(\ref{estimatorMOM}) from the likelihood perspective.
The probability density function for the 
Gaussian mixture with these parameters is
shown in Figure \ref{fig:zwei}.
The corresponding value of the log-likelihood function is $ -28.79618895$. 

If the estimate (\ref{estimatorMOM})
 is used as starting point in the EM algorithm, then 
 it converges to the stationary point $(\lambda,\mu,\sigma,\nu,\tau) = (0.500000,2.420362,5.77968,1.090329,1.090329) $.
 That point has  a   log-likelihood value of approximately 
  $-28.43415$. Comparing to Table 1 of \cite{CB}, this value is only beaten by the critical points associated to the endpoints $k=1$ and $k=7$. 

\begin{figure}[ht!]
\centering
\includegraphics[width=100mm]{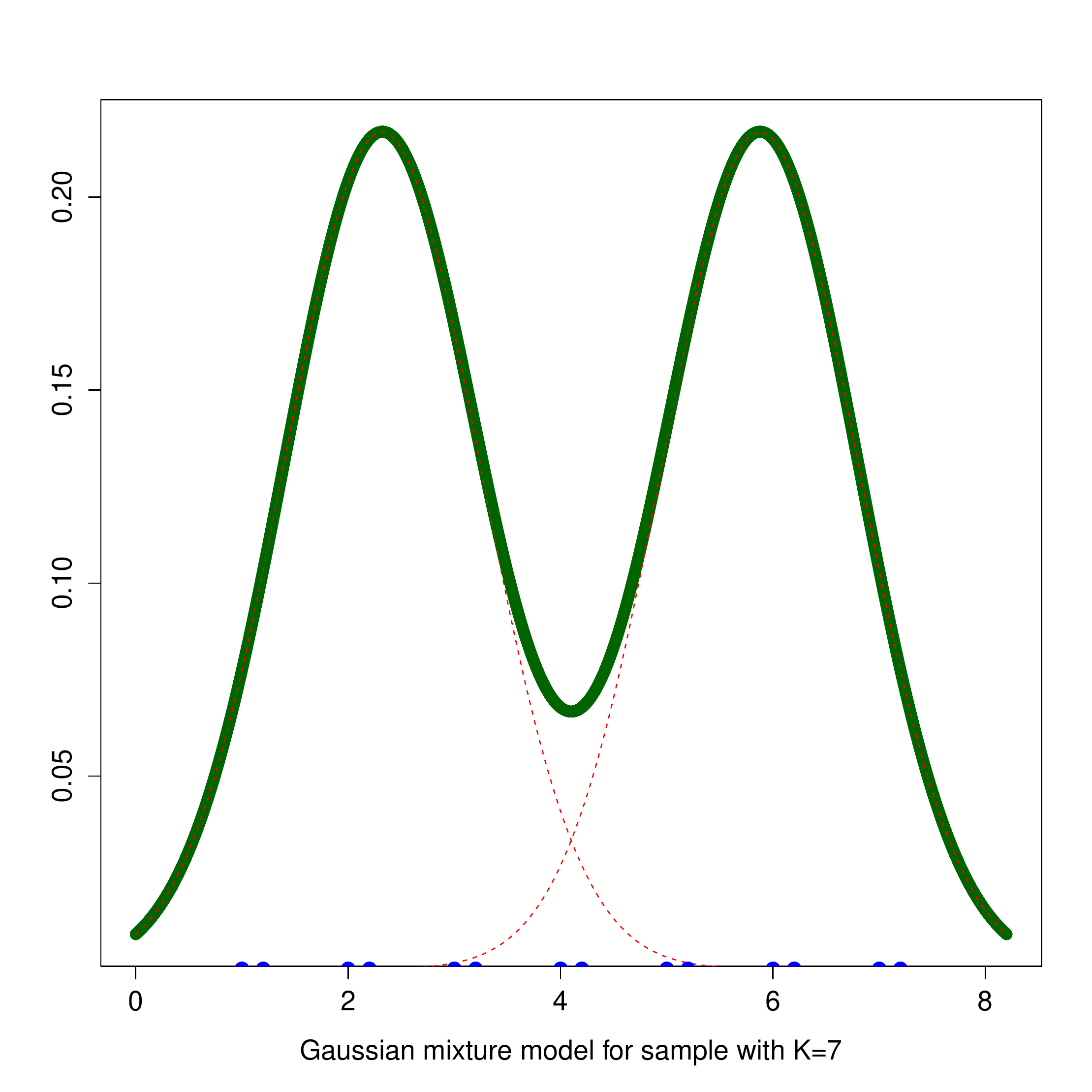}
\caption{\label{fig:zwei} 
The sample data for $K=7$ (in blue) is approximated by
a mixture of two Gaussians via the method of moments.
The parameter values are derived in Example \ref{ex:K7}.
}
\end{figure} 
\end{example}
We make the following observation:  of all the critical points listed in  \cite[Table 1]{CB},
the middle clusters get the lowest log-likelihood.
Hence an equal-means model is not very likely for this sample. This is further confirmed by 
the method of moments (MOM)
since, as mentioned above, the equal-means model is inconsistent with 
our polynomial  equations. 

Behavior similar to Example~\ref{ex:K7} is observed for all $K \geq 2$.
The MOM estimate separates the sample into two halves, and
assigns the same variance to both Gaussian components.
The exact parameter estimates are obtained by
substituting $m_1,k_2,k_4$ from (\ref{eq:evencumulants}) into (\ref{estimatorMOM}).
For $K=20$, the estimate computed by the EM algorithm with the MOM estimate 
as starting point beats in likelihood value all $K$ critical points listed in \cite{CB}.
For $K > 20$,  the likelihood value of the MOM estimate itself appears to be already better than
  the critical points listed in \cite{CB}.    In other words, the
        MOM produces good starting points for maximum likelihood.

\section{Higher Dimensions, Submodels and Next Steps}
\label{sec:six}

At present we know  little about the moment varieties of Gaussian mixtures
for $n \geq 2$, and we see this as an excellent topic for future investigations.
A guiding question is the following:

\begin{problem} \label{prob:needed}
Which order $d$ of cumulants/moments is needed to make the 
mixture model $\sigma_k (\mathcal{G}_{n,d})$ algebraically identifiable?
Which order $d$ is needed to obtain rational identifiability?
\end{problem}

A natural conjecture is that the  dimension of 
the variety $\sigma_k( \mathcal{G}_{n,d})$ 
always coincides with the expected number (\ref{eq:mixturedim}),
unless that number exceeds the dimension $N$ of the ambient projective space.
It is important to note that the analogous statement would not
be true for the submodels where all covariance matrices are zero.
These are the secant varieties of Veronese varieties, and
there is a well-known list of exceptional cases, due to
Alexander and Hirschowitz (cf.~\cite{BrOt}), where
these secant varieties do not have the expected dimension.
However, none of these cases is relevant in the case of
 Gaussian mixtures discussed here.

The following is the first bivariate instance
of the varieties $\sigma_k(\mathcal{G}_{n,d})$ for Gaussian mixtures.

\begin{example} 
\label{ex:s2G24} \rm
Let $k=2, n=2, d = 4$. The variety $\sigma_2( \mathcal{G}_{2,4})$
lives in the $\PP^{14}$ whose coordinates are the moments
up to order $4$.  This is the variety of secant lines
 for the $5$-dimensional variety featured in Proposition \ref{prop:G24}.
We checked that  $\sigma_2( \mathcal{G}_{2,4})$ has
the expected dimension, namely $11$.
\end{example}

We found it difficult to compute polynomials that vanish on
our moment varieties, including $\sigma_2( \mathcal{G}_{2,4})$.
One fruitful direction to make progress would be to first compute
subvarieties  that correspond
to statistically meaningful submodels. Such submodels
arise naturally when the parameters satisfy various natural constraints.
We illustrate this  for a small case.

Fix $k=2,n=2, d=3$. The variety $\sigma_2( \mathcal{G}_{2,3})$ 
is equal to its ambient space $\mathbb{P}^9$.
We consider the two submodels: that given by
equal variances and that given by equal means.
The number of parameters are $8$ and $9$ respectively.
Both of these models are not identifiable.

\begin{proposition}
The equal-means submodel of $\sigma_2( \mathcal{G}_{2,3})$ 
has dimension $5$ and degree $16$. It is identical to the
Gaussian moment variety $\mathcal{G}_{2,3}$ in Proposition \ref{prop:G23} so the mixtures add nothing new in $\mathbb{P}^9$.
The equal-variances submodel of $\sigma_2( \mathcal{G}_{2,3})$
has dimension $7$ and degree $15$ in $\PP^9$. Its ideal 
is Cohen-Macaulay and is generated by the
maximal minors of the
$6 \times 5$-matrix 
\begin{equation}
\label{eq:hbmatrix}
\begin{pmatrix}
   0 &   0 &    m_{00} &   m_{10} &    m_{01} \\
    0 &  m_{10} &    m_{20} &  m_{30} &  m_{21} \\
    m_{01} &    0 &    m_{02} & m_{12} &    m_{03} \\
     0 &  m_{00} &  2 m_{10} &  2 m_{20} &  2 m_{11} \\
     m_{00} &    0 &  2 m_{01} &  2 m_{11} & 2 m_{02} \\
     m_{10} &  m_{01} &  2 m_{11} &  2 m_{21} &  2 m_{12} 
\end{pmatrix}.
\end{equation}
\end{proposition}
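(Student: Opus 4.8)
The plan is to handle the two submodels separately. For the \emph{equal-means submodel}, I would start from the parametrization of $\sigma_2(\mathcal{G}_{2,3})$, which is the convex combination of two copies of the degree-$\leq 3$ truncation of (\ref{eq:gaussian}) with $n=2$, but now forcing $\mu^{(1)} = \mu^{(2)} = \mu$. The key observation is that moments are \emph{linear} in the mixing weight $\lambda$ once the two components share a mean: if the two covariance matrices are $\Sigma^{(1)}$ and $\Sigma^{(2)}$, then the mixture has moment generating function $\exp(\sum t_i\mu_i)\cdot\bigl(\lambda\exp(\tfrac12\sum\sigma^{(1)}_{ij}t_it_j) + (1-\lambda)\exp(\tfrac12\sum\sigma^{(2)}_{ij}t_it_j)\bigr)$, and expanding up to order $3$, the only place $\Sigma^{(1)}, \Sigma^{(2)}$ enter is through the single combination $\lambda\Sigma^{(1)} + (1-\lambda)\Sigma^{(2)}$, because there are no terms of order $\geq 4$ to detect the nonlinearity. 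Hence the image of the equal-means mixture map coincides with the image of the single-Gaussian map $\mathcal{G}_{2,3}$ (with effective covariance $\Sigma = \lambda\Sigma^{(1)}+(1-\lambda)\Sigma^{(2)}$, which ranges over all symmetric matrices). This proves the set-theoretic equality, hence equality of varieties, and the dimension $5$ and degree $16$ are then inherited directly from Proposition \ref{prop:G23}.

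For the \emph{equal-variances submodel}, I would again write down the parametrization: two Gaussians with means $\mu = (\mu_1,\mu_2)$, $\nu = (\nu_1,\nu_2)$, common covariance $\Sigma = (\sigma_{ij})$, and weight $\lambda$, giving $8$ parameters mapping to $\PP^9$. The claim has three parts: (i) the variety is cut out by the maximal minors of the $6\times 5$ matrix $B$ in (\ref{eq:hbmatrix}); (ii) that ideal is Cohen--Macaulay; (iii) the dimension is $7$ and degree $15$. For (i), I would mimic the proof of Proposition \ref{prop:surface}: first verify $J \subseteq I$ by exhibiting the left kernel vectors of $B$ explicitly — the rows of $B$ are built from the recurrence/differentiation relations satisfied by moments of a bivariate Gaussian (the analogues of (\ref{eq:recurrence}) in each coordinate direction), so there are three independent linear relations among the columns with coefficients polynomial in the parameters, forcing $\mathrm{rank}(B) \leq 2$ on the model. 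Since $B$ is $6\times 5$, $\mathrm{rank} \leq 2$ means all $3\times 3$ minors vanish; one checks the generic rank is exactly $2$. For the reverse inclusion and the absence of embedded components, I would invoke exactly the Eagon--Northcott/Cohen--Macaulayness machinery used in the proof of Proposition \ref{prop:surface}: the ideal of maximal ($(\min-1)$-sized, here $3\times 3$) minors dropping rank by one in a $6\times 5$ matrix of \emph{generic} linear forms is Cohen--Macaulay of the expected codimension, and this survives transverse linear sections, so if $V(J)$ has the expected dimension $7$ in $\PP^9$ (codimension $2$) and $V(J) = V(I)$ set-theoretically, then $J$ has no embedded primes and $J = I$. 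The dimension count and degree $15$ follow from the Hilbert series of a generic determinantal ideal of that format (the Eagon--Northcott complex gives the resolution and hence the Hilbert numerator, from which degree $\binom{4}{2}=\binom{d}{2}$-type formulas drop out — here $15$).

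The main obstacle will be step (i) for the equal-variances case: I need to produce the matrix $B$ of (\ref{eq:hbmatrix}) \emph{intrinsically} rather than just verifying it post hoc, i.e. explain why the rows are the right differentiation relations, and — more delicately — check that the specialization of a generic $6\times 5$ matrix to the particular sparse shape in (\ref{eq:hbmatrix}) (with the structural zeros and the repeated entries like the $2m_{11}$'s) is still a \emph{transverse} linear section, so that Cohen--Macaulayness and the Hilbert series are genuinely preserved. If transversality fails one has to argue directly, e.g. by a direct Gröbner basis / initial ideal computation showing the maximal minors of $B$ already generate a prime (or at least unmixed) ideal of the right dimension, analogous to the initial-ideal argument around (\ref{eq:initialideal}). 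I would also need to confirm set-theoretically that every point of $V(B)$ — including points at infinity where $m_{00}=0$ — is a limit of mixture moments, again by the "make the parameters large in the right direction" degeneration argument from the proof of Proposition \ref{prop:surface}. Everything else is routine verification in a computer algebra system.
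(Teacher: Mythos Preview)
Your treatment of the equal-means submodel is correct and is exactly the paper's reasoning: the moments of order $\leq 3$ depend only linearly on the covariance entries, so a convex combination of two Gaussians with the same mean has the same order-$\leq 3$ moments as a single Gaussian with covariance $\lambda\Sigma^{(1)}+(1-\lambda)\Sigma^{(2)}$, and the identification with $\mathcal{G}_{2,3}$ follows.

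For the equal-variances submodel there is a genuine gap: you have misread what ``maximal minors of the $6\times 5$ matrix'' means. The maximal minors of a $6\times 5$ matrix are the six $5\times 5$ minors, so the determinantal condition is $\mathrm{rank}(B)\leq 4$, not $\mathrm{rank}(B)\leq 2$. Concretely, one needs a \emph{single} right kernel vector (a nonzero $v\in\mathbb{A}^5$ with $Bv=0$) whose entries are polynomial in the parameters, not three independent kernel relations. Your proposed rank-$2$ locus of $3\times 3$ minors would have expected codimension $(6-2)(5-2)=12$ in $\PP^9$, which is impossible, whereas the $5\times 5$-minor locus has expected codimension $(6-4)(5-4)=2$, matching the asserted dimension $7$. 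Correspondingly, the relevant structural result is not the general Eagon--Northcott argument for a low-rank locus but the \emph{Hilbert--Burch theorem}: a Cohen--Macaulay ideal of codimension $2$ is generated by the maximal minors of an $(n{+}1)\times n$ presentation matrix, and its free resolution $0\to S(-6)^5\to S(-5)^6\to S$ immediately yields degree $15$. This is precisely what the paper invokes; its proof is otherwise a direct computer-algebra verification that the six $5\times 5$ minors of (\ref{eq:hbmatrix}) cut out the image of the parametrization with the stated invariants. Once you correct the size of the minors, your outline (verify $J\subseteq I$ via an explicit kernel vector, then check the ideal has the expected codimension so that Cohen--Macaulayness and primality follow) is a reasonable conceptual wrapper around that computation.
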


This proposition is proved by a direct computation. That the equal-means submodel of $\sigma_2( \mathcal{G}_{2,3})$ equals $\mathcal{G}_{2,3}$ is not so surprising, since the parametrization of the latter is linear in the variance parameters $s_{11},s_{12},s_{22}$.
This holds for all moments up to order 
$3$. The same is no longer true for $d \geq 4$. On the other hand, it was gratifying to see an occurrence,
in the matrix (\ref{eq:hbmatrix}),
of the {\em Hilbert-Burch Theorem}
for Cohen-Macaulay ideals of codimension~$2$.

We already noted that secant varieties of Veronese
varieties arise as the submodels where the variances are zero.
On the other hand, we can also consider the submodels
given by zero means. In that case we get
the secant varieties of varieties of powers
of quadratic forms. The following concrete example
was worked out with some input from Giorgio Ottaviani.

\begin{example} \label{ex:Faa} \rm
Consider the mixture of two bivariate Gaussians that
are centered at the origin. This model has $7$ parameters:
there is one mixture parameter, and 
each Gaussian has a $2 \times 2$ covariance matrix,
with three unknown entries.
We consider the variety $\mathcal{V}$ that is
parametrized by all moments of order exactly $d = 6$.
This variety has only dimension $5$. It lives
in  the $\PP^6$ with coordinates
$m_{06}, m_{15}, \ldots, m_{60}$.  
This hypersurface has degree $15$.
Its points are
the binary octics that are sums of the third powers
of two binary quadrics. Thus, this is the  secant
variety of a linear projection of the third
Veronese surface from $\PP^9$ to $\PP^6$.

The polynomial that defines $\mathcal{V}$ has
$1370$ monomials of degree $15$ in the six unknowns
$m_{06}, m_{15}, \ldots, m_{60}$. In fact, this is the unique
(up to scaling) invariant of binary sextics of degree $15$.
It is denoted $I_{15}$ in Faa di Bruno's book \cite[Table $IV^{10}$]{DB},
where a determinantal formula was given.
A quick way to compute $\mathcal{V}$  by elimination is as follows.
Start with the variety
$\sigma_2 (\nu_3(\PP^2))$
 of symmetric $3 \times 3 \times 3$-tensors of rank $\leq 2$.
This is defined by the maximal minors of a Hankel matrix of size $3 {\times} 6$.
It has degree $15$ 
and dimension $5$ in $\PP^9$. Now project into $\PP^6$. This projection 
has no base points, so the image is a hypersurface of degree~$15$.
\end{example}

In Example \ref{ex:Faa} we fixed the order of the moments.
For certain applications, also taking
moments of two orders  makes sense. For instance,
the tensor power method in machine learning \cite{AG, GHK} uses
the moments of order $d=2$ and $d=3$. It would be interesting
to determine the algebraic relations for these restricted moments.
 Geometrically, we should obtain
interesting varieties, even for  $k = 2$. Here is 
a specific example from machine learning.

\begin{example} \rm
Ge, Huang and Kakade \cite{GHK} focus on 
mixtures of Gaussians with zero mean,
and they show how to identify them numerically
using the moments of order $d=4$ and $d=6$.
We examine the corresponding variety
for $n=k=2$.  This lives in the $\PP^{12}$
with coordinates $m_{00}, 
m_{40}, m_{31}, m_{22}, m_{13}, m_{04},
m_{60},m_{51},m_{42},m_{33},m_{24},m_{15},m_{06}$.
We start with the variety $X$
 that is parametrized by the
$4$th and $6$th powers of binary quadrics.
This variety  has dimension three and degree $27$ in $\PP^{12}$.
We are interested  in the secant variety $\sigma_2(X)$.
This secant variety has the expected dimension $7$, so
the model is algebraically identifiable. We do not know
whether $\sigma_2(X)$ is rationally identifiable.
A relation of lowest degree is the following quartic:
$$
\begin{matrix}
6 m_{15} m_{22} m_{31}^2-10 m_{13} m_{24} m_{31}^2 -2 m_{06} m_{31}^3
+10 m_{04} m_{31}^2 m_{33} -9 m_{15} m_{22}^2 m_{40}+15 m_{13} m_{22} m_{24} m_{40} \\
+2 m_{13} m_{15} m_{31} m_{40} + 3 m_{06} m_{22} m_{31} m_{40}
-5 m_{04} m_{24} m_{31} m_{40}-10 m_{13}^2 m_{33} m_{40}
-m_{06} m_{13} m_{40}^2 \\
+m_{04} m_{15} m_{40}^2 
+10 m_{13}^2 m_{31} m_{42}
-15 m_{04} m_{22} m_{31} m_{42}+5 m_{04} m_{13} m_{40} m_{42} 
-6 m_{13}^2 m_{22} m_{51} \\
+ 9 m_{04} m_{22}^2 m_{51}
-2 m_{04} m_{13} m_{31} m_{51}-m_{04}^2 m_{40} m_{51} + 2 m_{13}^3 m_{60}
-3 m_{04} m_{13} m_{22} m_{60}+m_{04}^2 m_{31} m_{60}
\end{matrix}
$$
\end{example}
 
 \smallskip
 
In summary, the study of moments of mixtures
of Gaussians leads to many interesting projective varieties.
Their geometry is still largely unexplored, and offers a fertile
ground for investigations by algebraic geometers. On the statistical
side, it is most interesting to understand the fibers
of the natural parameterization of the variety
$\sigma_k(\mathcal{G}_{n,d})$.
Problem \ref{prob:needed} serves as the guiding question.
In the case of algebraic identifiability, we are always
interested in finding the algebraic degree of the
parametrization, and in effective methods for solving
for the model parameters.
This will be the topic of a forthcoming paper by the same authors.

\bigskip
\bigskip



\noindent
{\bf Acknowledgments.}
This project was supported by 
the INRIA@Silicon Valley program through the project
GOAL ({\em Geometry and Optimization with Algebraic Methods}),
by a Fellowship from the Einstein Foundation Berlin,
and by the US National Science Foundation.

\bigskip

\begin{small}

\bigskip \bigskip

\footnotesize
\noindent {\bf Authors' addresses:}

\medskip

\noindent 
Carlos Am\'endola, Institute of Mathematics, Technische Universit\"at Berlin, MA 6-2,
10623 Berlin, Germany, {\tt amendola@math.tu-berlin.de}

\medskip

\noindent 
Jean-Charles Faug\`ere, INRIA Paris-Rocquencourt,
 LIP6 - Universit\'e Paris 6,
75005 Paris, France, {\tt Jean-Charles.Faugere@inria.fr}

\medskip

\noindent Bernd Sturmfels, Department of Mathematics, University of\
 California, Berkeley, CA 94720-3840, USA,
{\tt bernd@berkeley.edu}

\end{small}
\end{document}